\title{Galois Correspondence for Partial Groupoid Actions}
\author[Lautenschlaeger and Tamusiunas]{Wesley G. Lautenschlaeger and Thaísa Tamusiunas}
\address{Instituto de Matem\'{a}tica, Universidade Federal do Rio Grande do Sul,  Av. Bento Gon\c{c}alves, 9500, 91509-900. Porto Alegre-RS, Brazil}
\email{wesley\_gl@hotmail.com}
\email{thaisa.tamusiunas@gmail.com}
\date{}
\newcounter{contador}
\numberwithin{contador}{section}
\newtheorem{theorem}[contador]{Theorem}
\newtheorem{prop}[contador]{Proposition}
\newtheorem{lemma}[contador]{Lemma}
\newtheorem{corollary}[contador]{Corollary}
\theoremstyle{definition}
\newtheorem{defi}[contador]{Definition}
\newtheorem{obs}[contador]{Remark}
\newtheorem{exe}[contador]{Example}
\newcommand{\G}{\mathcal{G}}
\newcommand{\cH}{\mathcal{H}}
\newcommand{\cA}{\mathcal{A}}
\begin{document}

\begin{abstract}
   We prove a Galois correspondence theorem for groupoids acting orthogonally and partially on commutative rings. We also consider partial actions that are not orthogonal, presenting two correspondences in this case: one for strongly Galois partial groupoid actions and one for global groupoid actions (without restriction). Some examples are presented.
\end{abstract}

\maketitle

\vspace{0.5 cm}

\noindent \textbf{2020 AMS Subject Classification:} Primary. 13B05. Secondary. 20L05, 18B40.

\noindent \textbf{Keywords:} groupoid, Galois theory, partial actions, Galois correspondence, orthogonal action

\section{Introduction}

Chase, Harrison and Rosenberg developed, in 1965, a Galois Theory for finite groups acting on commutative rings, in where they exhibit a generalization of the Fundamental Theorem of the Galois theory \cite{chase1969galois}. 
Right after, Villamayor and Zelinsky \cite{villamayor1966galois} generalized one of the constructions of \cite{chase1969galois} to extensions of commutative rings with finitely many idempotents. Although they did not explicitly defined groupoid actions, this was the intermediate object that they worked to prove their correspondence theorem. This was the first time that groupoids appeared in a Galois context. 

A Galois correspondence theorem for groupoids acting on commutative rings was showed in \cite{paques2018galois}. Given $B \subseteq A$ a commutative Galois extension with finite Galois groupoid $\G$, it was proved a one-to-one correspondence between the wide subgroupoids of $\G$ and the separable, $\beta$-strong $B$-subalgebras of $A$, where $\beta$ is a (global) action of $\G$ on $A$. Other Galois theories concerning global action of groupoids were also developed in \cite{cortes2017characterisation}, \cite{garta2024}, \cite{lata2021galois}, \cite{pata}, \cite{pataIII} and \cite{pedrotti2023injectivity}.

The study of partial group actions is recent in mathematical terms and its first objective was to classify $C^*$-algebras \cite{exel1998partial}. In a groupoid context, many questions concerning partial actions were investigated. For instance, in \cite{bagio2010partial} it was introduced the concept of partial action of ordered groupoids on rings; in \cite{lau2021semigrupoides}, it was showed that there is a biunivocal correspondence between partial groupoid actions and global inverse semigroupoid actions; in \cite{bagio2012partial}, a globalization theorem was proved and a characterization theorem for Galois extensions was showed. We highlight the special case in \cite{bagio2020restriction}, where the authors related a class of partial groupoid actions to partial group actions, calling them as \emph{group-type} partial groupoid actions. The relation groupoid-group presented in  \cite{bagio2020restriction} was used in \cite{bagio2022galois} to construct a Galois correspondence for partial actions of groupoids, but only restricted to group-type partial actions, and the mechanisms used there cannot be applied to any partial action. Therefore, a different approach needs to be thought of.

So, the first objective of this work is to extend the Galois correspondence presented in \cite{bagio2022galois} to a partial groupoid action without requiring the group-type property. For this, we will relate a partial Galois extension to the Galois extension regarding its globalization. The partial-to-global relation will give a more general partial Galois correspondence, as we will present in Section 3.

The second objective is to improve the assumption of Galois correspondence theorems for groupoid actions in general. In fact, all the Galois correspondences found in the literature require the action to be orthogonal, that is, $A = \bigoplus_{e \in \G_0} A_e$, where $\alpha = (A_g,\alpha_g)_{g \in \G}$ is a partial action of a groupoid $\G$ on a ring $A$ and $\G_0$ is the set of identities of $\G$. This hypothesis covers a broad class of actions, but is still restrictive because it does not cover them all. Every action can be orthogonalized, but the problem is that if we have a Galois extension $A|_{A^{\varepsilon}}$ regarding an orthogonalization $\varepsilon$, it does not imply that the extension $A|_{A^{\alpha}}$ is Galois regarding the original action $\alpha$. In Section 4 we present some examples, as well as we prove two Galois correspondences without the orthogonal hypothesis.

We give now a brief outline of the paper. In the second section we present basic definitions and results about partial groupoid actions and Galois theory. In the third section we prove our first main result, which states that there is a one-to-one correspondence between the wide subgroupoids $\cH$ of the groupoid $\G$ and determined separable, $\alpha$-strong $A^\alpha$-subalgebras $C$ of the algebra $A$, where $\alpha$ is a partial action of $\G$ on $A$ (Theorem \ref{teofund}). In Section 4 we define orthogonalization of a given partial action and we study which properties of Galois theory remain valid under orthogonalization. We also define strongly $\alpha$-Galois extensions and we present in Theorem \ref{teogalnaoort} a Galois correspondence in this case, which is our second main result. Some examples are presented. We end the paper with our third main result, which is a Galois correspondence for global actions without restrictions on the action (Theorem \ref{teofinal}).

By rings, we mean unital rings. We also use the notation $A \triangleleft B$ to indicate that $A$ is a two-sided ideal of the ring $B$.


\section{Preliminaries}

We recall that a \emph{groupoid} \( \G \) is a small category in which every morphism is invertible. Given a morphism \( g \in \G \), we denote its \emph{domain} and \emph{range} by \( d(g) \) and \( r(g) \), respectively. Hence, $d(g) = g^{-1}g$ and $r(g) = gg^{-1}$.  Furthermore, \( \G_0 \) denotes the set of objects of \( \G \). Instead of using $Mor(\G)$ to denote the set of morphisms of $\G$, we identify $\G$ with $Mor(\G)$. We denote $$\G_2 = \{(g,h) \in \G \times \G : \text{the product } gh \text{ is defined} \}.$$ For each $x \in \G_0$, we identify $x$ with the identity morphism $id_x$, thus considering $\G_0\subseteq \G$.

Given $e \in \G_0$, consider $\G(e) = \{ g \in \G : r(g) = d(g) = e \}$. It is easy to verify that $\G(e)$ is a group for all $e \in \G_0$, called \textit{the isotropy group associated to} $e$. A groupoid $\G$ is said to be \emph{connected} if given any $e_1, e_2 \in \G_0$ there exists $g \in \G$ with $d(g) = e_1$ and  $r(g) = e_2$.

Given $e_1,e_2\in\G_0$ we set $\G(e_1,e_2):=\{g\in \mathcal{G}: d(g)=e_1\,\,\text{and}\,\,r(g)=e_2\}$. It is well-known that any groupoid is a disjoint union of connected subgroupoids. Indeed, we define the following equivalence relation in $\G_0$: for all $e_1,e_2 \in \G_0$, 
\begin{align*}
e_1\sim e_2 &\text{ if and only if } \G(e_1,e_2)\neq \emptyset.
\end{align*}

Every equivalence class $\bar{e} \in\G_0/\!\!\sim$ determines a connected subgroupoid $\G_{\bar{e}}$ of $\G$, whose set of identities is $\bar{e}$. The subgroupoid $\G_{\bar{e}}$ is called the {\it connected component of $\G$ associated to $\bar{e}$}. It is clear that $\G=\dot\cup_{\bar{e}\in \G_0/\!\sim}\G_{\bar{e}}$.

Given a nonempty set $S$, we recall that the \emph{coarse groupoid} associated to $S$ is the groupoid $S^2 = S \times S$, where the product $(s,t)(u,v)$ is defined if and only if $s = v$ and in this case $(s,t)(u,v) = (u,t)$. If $\G$ is a connected groupoid,  we have $\G \simeq \G_0^2 \times \G(e)$ (c.f. \cite[Proposition 3.3.6]{lawson1998inverse}), and this isomorphism does not depend on the choice of $e \in \G_0$. Notice that every two finite coarse groupoids with the same amount of identities are isomorphic. Hence we will just denote by $\cA_n$ the coarse groupoid with $n$ identities. Thus, if $\G$ is a finite connected groupoid with cardinality $|\G_0| = n$ and $e \in \G_0$, we have $\G \simeq \cA_n \times \G(e)$.

\subsection{Partial Groupoid Actions}

We begin with the definition of partial groupoid action. 

\begin{defi} \label{defacparcgrpord}
Let $\G$ be a groupoid and $A$ be a ring. We say that $\alpha = (A_g,\alpha_g)_{g \in \G}$ is a \emph{partial action} of $\G$ on $A$ if, for all $g \in \G$ $A_g \triangleleft A_{r(g)} \triangleleft A$, $\alpha_g : A_{g^{-1}} \to A_g$ is an isomorphism of rings, and
\begin{enumerate}
    \item[(P1)] $\alpha_e = \text{Id}_{A_e}$, for all $e \in \G_0$, and $A = \sum_{e \in \G_0} A_e$;
    
    \item[(P2)] $\alpha_{h}^{-1}(A_{g^{-1}} \cap A_h) \subset A_{(gh)^{-1}}$, for all $(g,h) \in \G_2$;
    
    \item[(P3)] $\alpha_g \circ \alpha_h(x) = \alpha_{gh}(x)$, for all $(g,h) \in \G_2, x \in \alpha_h^{-1}(A_{g^{-1}}\cap A_h)$.
\end{enumerate}
\end{defi}

The first thing to note is that condition (P1) is not the same as reference \cite{bagio2012partial}, because we require that $A = \sum_{e \in \G_0} A_e$, and in \cite{bagio2012partial} this condition was not requested. When $\G$ is a group with identity element $1_{\G}$, this condition translates into $A = A_{1_{\G}}$. If we do not require for this, it may happen that a partial groupoid action is not a partial group action, even when the groupoid is a group.

Naturally, a partial action is said to be \emph{global} if $A_g = A_{r(g)}$, for all $g \in \G$.

\begin{prop} \cite[Lemma 1.1]{bagio2012partial} \label{propriedadesacpargrp}
Let $\alpha = (A_g,\alpha_g)_{g \in \G}$ be a partial action of the groupoid $\G$ on the ring $A$. Then it hold:
\begin{enumerate}
    \item[(i)] $\alpha_g^{-1} = \alpha_{g^{-1}}$, for all $g \in \G$.
    
    \item[(ii)] $\alpha_g(A_{g^{-1}} \cap A_h) = A_g \cap A_{gh}$, for all $(g,h) \in \G_2$.
\end{enumerate}
\end{prop}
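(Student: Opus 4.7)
The plan is to derive both statements as consequences of axioms (P1)--(P3) by applying them to carefully chosen composable pairs, and then recycling part (i) inside the proof of part (ii).

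For part (i), the idea is to apply (P3) twice. Consider the pair $(g^{-1}, g) \in \G_2$, for which the domain condition of (P3) with ``$g$''$=g^{-1}$ and ``$h$''$=g$ reads $x \in \alpha_g^{-1}(A_g \cap A_g) = A_{g^{-1}}$, hence holds for every $x \in A_{g^{-1}}$. Since $g^{-1}g = d(g)$ is an identity, (P1) gives $\alpha_{d(g)} = \text{Id}_{A_{d(g)}}$, so (P3) yields $\alpha_{g^{-1}} \circ \alpha_g = \text{Id}_{A_{g^{-1}}}$. Applying (P3) symmetrically to $(g, g^{-1})$ yields $\alpha_g \circ \alpha_{g^{-1}} = \text{Id}_{A_g}$. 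Since $\alpha_{g^{-1}}\colon A_g \to A_{g^{-1}}$ is a ring isomorphism by the very definition of partial action, this identifies $\alpha_{g^{-1}}$ as the two-sided inverse of $\alpha_g$.

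For part (ii), I would prove the two inclusions separately. The inclusion $\alpha_g(A_{g^{-1}} \cap A_h) \subseteq A_g \cap A_{gh}$ is immediate: containment in $A_g$ holds because $A_g$ is the image of $\alpha_g$, while containment in $A_{gh}$ follows by applying (P2) to the pair $(h^{-1}, g^{-1}) \in \G_2$, which gives $\alpha_{g^{-1}}^{-1}(A_h \cap A_{g^{-1}}) \subseteq A_{(h^{-1}g^{-1})^{-1}} = A_{gh}$; by part (i), $\alpha_{g^{-1}}^{-1} = \alpha_g$, and the desired inclusion follows.

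For the reverse inclusion, given $y \in A_g \cap A_{gh}$, I would write $y = \alpha_g(x)$ with $x = \alpha_{g^{-1}}(y) \in A_{g^{-1}}$ and then show $x \in A_h$. Applying (P2) to $(g^{-1}, gh) \in \G_2$ shows that $\alpha_{(gh)^{-1}}(y) \in A_{h^{-1}}$; it already lies in $A_{(gh)^{-1}}$, so it belongs to the intersection $A_{h^{-1}} \cap A_{(gh)^{-1}}$. This is precisely the domain condition needed to apply (P3) to the pair $(h, (gh)^{-1}) \in \G_2$ at the element $y$, which yields $\alpha_h(\alpha_{(gh)^{-1}}(y)) = \alpha_{h(gh)^{-1}}(y) = \alpha_{g^{-1}}(y) = x$. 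Since the image of $\alpha_h$ is $A_h$, this gives $x \in A_h$, completing the proof. The only real obstacle here is bookkeeping: several composable pairs must be chosen so that the compositions reduce correctly (to $d(g)$, $r(g)$, $gh$, $g^{-1}$) and the domain conditions in (P2)--(P3) must be verified at each use; once the right pairs are identified the computations are routine.
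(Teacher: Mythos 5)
Your proof is correct: both uses of (P2)--(P3) are applied to genuinely composable pairs, the domain conditions are verified where needed, and the reduction $\alpha_{d(g)}=\mathrm{Id}_{A_{d(g)}}$ via (P1) together with $A_{g^{-1}}\subseteq A_{d(g)}$ is exactly what makes part (i) work. The paper itself gives no proof (it cites \cite[Lemma 1.1]{bagio2012partial}), and your argument is essentially the standard one from that reference, so there is nothing to add.
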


We say that the partial action $\alpha$ is \emph{preunital} if each $A_e$ is generated by a central idempotent of $A$, denoted by $1_e$, for $e \in \G_0$. We say that a partial action $\alpha$ is \emph{unital} if each $A_g$ is generated by a central idempotent $1_g$, for $g \in \G$.

We say that a partial action $\alpha$ is \emph{orthogonal} if
\begin{enumerate}
    \item[(PO)] $A = \bigoplus_{e \in \G_0} A_e$
\end{enumerate}
holds.

\begin{defi}
Let $\alpha = (A_g,\alpha_g)_{g \in \G}$ be a partial action of a groupoid $\G$ on a ring $A$. A global action $\beta = (B_g,\beta_g)_{g \in \G}$ of $\G$ on a ring $B$ is said to be a \emph{globalization} of $\alpha$ if, for all $e \in \G_0$, there is a monomorphism of rings $\varphi_e : A_e \to B_e$ such that
\begin{enumerate}
    \item[(G1)] $\varphi_e(A_e) \triangleleft B_e$;
    
    \item[(G2)] $\varphi_{r(g)}(A_g) = \varphi_{r(g)}(A_{r(g)}) \cap \beta_g(\varphi_{d(g)}(A_{d(g)}))$, for all $g \in \G$;
    
    \item[(G3)] $\beta_g \circ \varphi_{d(g)}(a) = \varphi_{r(g)}  \circ \alpha_g(a)$, for all $a \in A_{g^{-1}}$, $g \in \G$;
    
    \item[(G4)] $B_g = \sum_{r(h) = r(g)} \beta_h\left ( \varphi_{d(h)}(A_{d(h)}) \right)$, for all $g \in \G$.
\end{enumerate}
\end{defi}

We have a characterization for partial actions that have globalizations.

\begin{theorem} \label{teoglob} \cite[Theorem 2.1]{bagio2012partial}
Let $\alpha = (A_g,\alpha_g)_{g \in \G}$ be a preunital partial action of a groupoid $\G$ on a ring $A$.  Then $\alpha$ has a globalization $\beta$ if and only if $\alpha$ is unital. Moreover, such $\beta$ is unique up to isomorphism.
\end{theorem}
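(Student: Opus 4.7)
The plan is to handle the two implications separately and then address uniqueness. The forward direction is a short argument with central idempotents, the backward direction requires an explicit construction of the enveloping action, and uniqueness rests on a universal-property-type argument driven by axiom (G4).

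\emph{Forward direction.} Suppose $\beta = (B_g, \beta_g)_{g \in \G}$ is a globalization of $\alpha$ with monomorphisms $\varphi_e : A_e \to B_e$. Fix $g \in \G$. Since $\alpha$ is preunital, $A_e = 1_e A$ for central idempotents $1_e$. By (G1), $\varphi_e(A_e) \triangleleft B_e$, and since $\varphi_e$ is a ring monomorphism, $\varphi_e(1_e)$ is a central idempotent of $B_e$ generating $\varphi_e(A_e)$. As $\beta_g$ is a ring isomorphism from $B_{d(g)}$ onto $B_{r(g)}$, the element $\beta_g(\varphi_{d(g)}(1_{d(g)}))$ is a central idempotent of $B_{r(g)}$ generating $\beta_g(\varphi_{d(g)}(A_{d(g)}))$. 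By (G2), $\varphi_{r(g)}(A_g)$ is the intersection of two ideals of $B_{r(g)}$ generated by central idempotents and hence is itself generated by the product of those idempotents. Pulling this generator back via the monomorphism $\varphi_{r(g)}$ yields a central idempotent $1_g \in A_{r(g)}$ with $A_g = 1_g A_{r(g)}$, so $\alpha$ is unital.

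\emph{Backward direction.} Assuming $\alpha$ is unital, one constructs the enveloping action in the spirit of Abadie's construction for partial group actions, suitably adapted to groupoids. For each $e \in \G_0$ define $B_e$ as a subring of $\prod_{h \in \G,\, r(h)=e} A_h$ cut out by an equivariance condition derived from (P2)--(P3); set $B = \bigoplus_{e \in \G_0} B_e$, and let $\beta_g : B_{d(g)} \to B_{r(g)}$ act by translation of indices, $(a_h)_{r(h)=d(g)} \mapsto (a_{g^{-1}k})_{r(k)=r(g)}$. Embed $\varphi_e : A_e \to B_e$ by $\varphi_e(a) = (\alpha_{h^{-1}}(a \cdot 1_h))_{r(h)=e}$. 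The ring and global-action axioms for $\beta$, together with (G1), (G3), and (G4), are essentially built into the construction; axiom (G2) is the delicate one, where one must identify $\varphi_{r(g)}(A_g)$ with the ideal in $B_{r(g)}$ cut out by the product of the two central idempotents appearing in the forward direction, and this is the step that genuinely requires every $A_h$ to be unital.

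\emph{Uniqueness.} If $(B', \beta', \varphi'_e)$ is another globalization, axiom (G4) expresses every element of $B'_e$ as a finite sum $\sum_i \beta'_{h_i}(\varphi'_{d(h_i)}(a_i))$, so one defines $\Psi : B \to B'$ by $\beta_h(\varphi_{d(h)}(a)) \mapsto \beta'_h(\varphi'_{d(h)}(a))$ and extends additively. Well-definedness is the crux: any relation of the form $\sum_i \beta_{h_i}(\varphi_{d(h_i)}(a_i)) = 0$ in $B$ must already be implied by (P2), (P3), and the existence of the idempotents $1_h$, and is therefore forced to hold in any globalization. The principal obstacle throughout is the combinatorial bookkeeping imposed by the groupoid setting: products $gh$ are defined only when $(g,h) \in \G_2$, so every manipulation of $\alpha_g \alpha_h$ or $\beta_g \beta_h$ must be accompanied by a composability check, and the idempotents $1_h$ must be carefully tracked through every translation so that all expressions remain in the appropriate ideals.
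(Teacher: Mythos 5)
The paper does not prove this statement at all: it is quoted verbatim from \cite[Theorem 2.1]{bagio2012partial} as a background result, so there is no in-paper proof to compare against. Judged on its own terms, your forward direction is correct and essentially complete: the identity of an ideal is a central idempotent of the ambient ring, $\beta_g$ transports $\varphi_{d(g)}(1_{d(g)})$ to a central idempotent of $B_{r(g)}$, (G2) exhibits $\varphi_{r(g)}(A_g)$ as the intersection of two centrally generated ideals, hence generated by the product of the generators, and centrality in $A_{r(g)}$ lifts to centrality in $A$ because $1_{r(g)}$ is central. That is the standard argument and it works.

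The converse and the uniqueness claim, however, are outlines in which the genuinely hard steps are asserted rather than carried out. First, your ambient ring is misindexed: the $h$-component of $\varphi_e(a) = (\alpha_{h^{-1}}(a1_h))_{r(h)=e}$ lies in $A_{h^{-1}} \subseteq A_{d(h)}$, not in $A_h$, so the product should be $\prod_{r(h)=e} A_{d(h)}$ (or functions $f$ on $\G(-,e)$ with $f(h) \in A_{d(h)}$). Second, defining $B_e$ by ``an equivariance condition derived from (P2)--(P3)'' is not the right construction: axiom (G4) forces $B_e = \sum_{r(h)=e}\beta_h(\varphi_{d(h)}(A_{d(h)}))$, i.e.\ the sum of the translates of the embedded copies of $A$, and an equivariantly cut-out subring need not coincide with this. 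Third, you never verify (G1) or (G2); these are exactly the points where unitality of every $A_g$ (not just every $A_e$) enters, via identities of the form $\varphi_e(a)\cdot\beta_h(\varphi_{d(h)}(b)) = \varphi_e\bigl(a\,\alpha_h(b1_{h^{-1}})\bigr)$, whose proof requires $1_h$. Finally, in the uniqueness argument you correctly identify well-definedness of $\Psi$ as the crux and then simply declare it to hold; the actual argument recovers each summand $\beta_{h}(\varphi_{d(h)}(a))$ of an element $x \in B_e$ from $x$ itself by multiplying by the idempotents $\beta_k(\varphi_{d(k)}(1_{d(k)}))$ and using (G2)--(G3), which shows that the decomposition data, and hence $\Psi(x)$, depend only on $x$ and on $\alpha$. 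Without that step the map $\Psi$ is not known to exist, so uniqueness is not established.
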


\subsection{Galois Theory}

In this subsection we present some basic concepts regarding Galois theory. From now on, assume that $\G$ is a groupoid, and $\alpha = (A_g,\alpha_g)_{g \in \G}$ is a unital partial action of $\G$ on a ring $A$.  We define
    \begin{align*}
        A^{\alpha} = \{a \in A : \alpha_g(a1_{g^{-1}}) = a1_g, \text{ for all } g \in \G\},
    \end{align*}
    the \emph{subring of invariants of} $A$ \emph{under} $\alpha$.

\begin{defi}
We say that $A$ is a \emph{partial $\alpha$-Galois extension of} $A^\alpha$ if there is $\{x_i,y_i\}_{i=1}^n \subseteq A$ such that
    \begin{align*}
        \sum_{i=1}^n x_i\alpha_g(y_i1_{g^{-1}}) = \sum_{e \in \G_0} \delta_{e,g}1_e,
    \end{align*}
    for all $g \in \G$. In this case, we say that the set $\{x_i,y_i\}_{i=1}^n$ is a set of \emph{partial $\alpha$-Galois coordinates of} $A$ \emph{over} $A^{\alpha}$.
\end{defi}

\begin{defi}
Let $\mathcal{H}$ be a subgroupoid of $\G$. We define the \emph{restriction of $\alpha$ to} $\cH$ as
\begin{align*}
    \alpha|_{\cH} = (A_h,\alpha_h)_{h \in \cH}.
\end{align*}
\end{defi}

\begin{defi}
    Let $C$ be a $A^{\alpha}$-subalgebra of $A$. We define
    \begin{align*}
        \G_C = \{g \in \G : \alpha_g(a1_{g^{-1}}) = a1_g, \text{ for all } a \in C\}.
    \end{align*}
\end{defi}

Clearly, $\G_0 \subseteq \G_C$, for all $C$. However, $\G_C$ is not always a subgroupoid of $\G$. An example of that can be found at \cite[Example 6.3]{dokuchaev2007partial}.

\begin{defi}
    Let $C$ be a $A^\alpha$-subalgebra of a ring $A$. We say that $C$ is:
    \begin{enumerate}
        \item[(i)] $A^{\alpha}$-\emph{separable} if $C$ is a projective ($C \otimes_{A^\alpha} C^{op}$)-module \cite[Ch. III $\S$1]{knus2006theorie};

        \item[(ii)] $\alpha$-\emph{strong} if for all $g,h \in \G$ with $r(g) = r(h)$ and $g^{-1}h \notin \G_C$, and for all nonzero idempotent $e \in A_g \cup A_h$ there is an element $a \in C$ such that $\alpha_g(a1_{g^{-1}})e \neq \alpha_h(a1_{h^{-1}})e$ \cite[Proposition 5.4]{bagio2022galois}.
    \end{enumerate}
\end{defi}

\begin{obs}
    Concerning $C$ being $A^\alpha$-separable, we have the following equivalent definition (c.f. \cite[Théorème 1.4]{knus2006theorie}): $C$ is $A^\alpha$-separable if there is an idempotent $e = \sum_{i=1}^n x_i \otimes y_i \in C \otimes_{A^\alpha} C$ such that $\displaystyle \sum_{i=1}^n x_iy_i = 1_C$ and
 $e(1_C \otimes x - x \otimes 1_C) = 0$, for all $x \in S$. We say that $e$ is an \emph{idempotent of separability} of the extension $C|_{A^\alpha}$.
\end{obs}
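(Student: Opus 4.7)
The statement asserts the standard equivalence between the categorical definition of separability (projectivity of $C$ over the enveloping algebra) and the existence of a separability idempotent; note that the ``$S$'' appearing in the last formula is a typo for ``$C$''. The plan is to exploit the commutativity of $C$ (inherited from $A$), which makes $C^{\mathrm{op}}=C$ and reduces the enveloping algebra to $C^e := C\otimes_{A^\alpha}C$ with componentwise multiplication, and then to run the classical argument of Knus--Ojanguren by constructing the idempotent as the image of $1_C$ under a splitting of the multiplication map.

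First I would introduce the multiplication map $\mu\colon C^e\to C$, $x\otimes y\mapsto xy$, as a surjective homomorphism of $C^e$-modules, where $C$ carries the $C^e$-module structure $(a\otimes b)\cdot c := abc$ and $C^e$ acts on itself by left multiplication. For the direction $(\Rightarrow)$, assume $C$ is projective as a $C^e$-module; then $\mu$ admits a $C^e$-linear section $s\colon C\to C^e$, and I set $e := s(1_C) = \sum_{i=1}^n x_i\otimes y_i$. Applying $\mu$ gives $\sum_i x_iy_i = 1_C$. For the centralizing property, for each $x\in C$ both $(x\otimes 1_C)\cdot 1_C$ and $(1_C\otimes x)\cdot 1_C$ equal $x$ in $C$ (this is precisely where commutativity of $C$ enters), so $C^e$-linearity of $s$ yields $(x\otimes 1_C)\,e = s(x) = (1_C\otimes x)\,e$, that is, $e(1_C\otimes x - x\otimes 1_C)=0$. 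Idempotency of $e$ then follows by multiplying the identity $(x_j\otimes 1_C)\,e = (1_C\otimes x_j)\,e$ on the right by $1_C\otimes y_j$ and summing over $j$, using $\sum_j x_jy_j=1_C$.

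For the converse $(\Leftarrow)$, given such an $e$, I would define $s\colon C\to C^e$ by $s(c) := (c\otimes 1_C)\,e$. Then $\mu(s(c)) = c\,\mu(e)=c$, so $\mu\circ s = \mathrm{id}_C$, and $C^e$-linearity follows from
\[ s((a\otimes b)\cdot c) = (abc\otimes 1_C)\,e = (ac\otimes 1_C)(b\otimes 1_C)\,e = (ac\otimes 1_C)(1_C\otimes b)\,e = (a\otimes b)(c\otimes 1_C)\,e = (a\otimes b)\,s(c), \]
where the centralizing hypothesis is used in the middle equality. Hence $\mu$ splits and $C$ is a direct summand of $C^e$ as a $C^e$-module, so it is projective, as desired.

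The main bookkeeping subtlety is keeping the two distinct $C^e$-module structures straight (the one on $C$ induced through $\mu$, versus the one on $C^e$ by left multiplication); once this is clear, the argument reduces to the familiar separability-idempotent construction, with commutativity of $C$ playing the role of making the enveloping algebra commutative and allowing the centralizing condition to be written on a single side.
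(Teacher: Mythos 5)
Your proposal is correct and is precisely the classical splitting-of-the-multiplication-map argument from the cited reference (\cite[Théorème 1.4]{knus2006theorie}); the paper itself offers no proof of this remark, deferring entirely to that citation, so there is nothing to diverge from. Your reading of ``$x \in S$'' as a typo for ``$x \in C$'' is also the intended one, and all the individual steps (the section $s(c)=(c\otimes 1_C)e$, the idempotency computation via $\sum_j x_jy_j = 1_C$, and the use of commutativity to collapse $C^{\mathrm{op}}$ to $C$) check out.
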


\section{Correspondence Theorem}

 This section is intended to prove a Galois correspondence for partial groupoid actions. From now on, assume that the groupoid $\G$ is finite.
 
 Take $e \in \G_0$. Consider $\G(-,e) = \{ g \in \G : r(g) = e \}$. Suppose that $\G(-,e) = \{e = g_{1,e}, g_{2,e} \ldots, g_{n_e,e}\}$, where $n_e$ is the cardinality $|\G(-,e)|$ of $\G(-,e)$. Consider the following assertion: \begin{align}\label{superfluous}
      \text{For each } g \in \G, g_{r(g),i}^{-1}gg_{d(g),i} \in \G_0 \text{ implies that } g \in \G_0, \text{ for all } 1 \leq i \leq n_{r(g)} = n_{d(g)}.
 \end{align}

 In \cite[Theorem 5.1]{bagio2012partial}, it was proved that if \eqref{superfluous} holds, then $A$ being a partial $\alpha$-Galois extension of $A^{\alpha}$ implies that $B$ is a $\beta$-Galois extension of $B^\beta$, where $\beta$ is the globalization of $\alpha$, a global action of $\G$ on a ring $B$.  We shall prove that this hypothesis is superfluous. So we start this section by improving \cite[Theorem 5.1]{bagio2012partial}.

\begin{lemma} \label{lemagaloisparcialsseglobal}
    Assume that $\alpha$ is orthogonal. Consider a global action $\beta$ of $\G$ on a ring $B$, and assume that $\beta$ is a globalization of $\alpha$. The following statements are equivalent:
    \begin{enumerate}
        \item[(i)] $A$ is a partial $\alpha$-Galois extension of $A^{\alpha}$.

        \item[(ii)] $B$ is a $\beta$-Galois extension of $B^\beta$.
    \end{enumerate}
\end{lemma}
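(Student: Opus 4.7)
My plan is to transfer Galois coordinates across the globalization in both directions. The orthogonality of $\alpha$ assembles the monomorphisms $\varphi_e\colon A_e\to B_e$ into a single embedding $\varphi\colon A\hookrightarrow B$, and as a preliminary I would verify that (G4) together with this orthogonality yields a decomposition $B=\bigoplus_{e\in\G_0}B_e$ with central idempotents $1_{B_e}$ whose sum is $1_B$. This reduction to a diagonal setting is the only place the orthogonality hypothesis is genuinely used.

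For (i)$\Rightarrow$(ii), starting from partial Galois coordinates $\{x_i,y_i\}_{i=1}^{n}\subseteq A$, I would enlarge the system by orbits: for each $(g,i)\in\G\times\{1,\dots,n\}$ define
\begin{align*}
X_{g,i}=\beta_g\bigl(\varphi_{d(g)}(x_i 1_{g^{-1}})\bigr),\qquad
Y_{g,i}=\beta_g\bigl(\varphi_{d(g)}(y_i 1_{g^{-1}})\bigr).
\end{align*}
For an arbitrary $h\in\G$, I would expand $\sum_{g,i}X_{g,i}\,\beta_h\bigl(Y_{g,i}\cdot 1_{B_{h^{-1}}}\bigr)$: orthogonality of $B$ forces the factor $Y_{g,i}\cdot 1_{B_{h^{-1}}}$ to vanish unless $r(g)=d(h)$, the global axiom gives $\beta_h\beta_g=\beta_{hg}$ on the surviving terms, and (G3) converts everything into $\varphi$-images of $\alpha$-applications. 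Reindexing by $k=hg$ then collapses the double sum to $\varphi\bigl(\sum_i x_i\,\alpha_h(y_i 1_{h^{-1}})\bigr)=\sum_e\delta_{e,h}1_{B_e}$, which is exactly the required identity.

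For (ii)$\Rightarrow$(i), I would start from Galois coordinates $\{X_j,Y_j\}_{j=1}^{m}\subseteq B$ for $\beta$ and project them onto $A$: put $x_j=\sum_{e}\varphi_e^{-1}(X_j\,1_{\varphi_e(A_e)})$ and analogously $y_j$. Multiplying the global identity by $\varphi(1_{A_{g^{-1}}})$ and pulling back through $\varphi_e$ yields the partial Galois identity; the identification $A^{\alpha}=A\cap B^{\beta}$, needed to ensure that the $x_j,y_j$ live over the correct base, is a straightforward consequence of (G3) together with the fact that $\varphi_e(A_e)\triangleleft B_e$.

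The main obstacle lies in the forward direction: (G3) only rewrites $\beta_{hg}(\varphi_{d(g)}(a))$ when $a\in A_{(hg)^{-1}}$, whereas $y_i 1_{g^{-1}}$ is merely in $A_{g^{-1}}$. This is precisely the gap that \cite[Theorem 5.1]{bagio2012partial} closed via the hypothesis \eqref{superfluous}. My replacement plan is to decompose $y_i 1_{g^{-1}}=y_i 1_{g^{-1}}1_{(hg)^{-1}}+y_i 1_{g^{-1}}(1-1_{(hg)^{-1}})$, apply (G3) to the first summand, and show that the complementary piece, once multiplied by the corresponding $X_{g,i}$ and summed over $g$, cancels. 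Orthogonality provides the bookkeeping needed for this cancellation, making \eqref{superfluous} unnecessary.
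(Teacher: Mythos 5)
Your (i)$\Rightarrow$(ii) construction does not work as stated, and the failure is already visible in the simplest cases. Take $h=e\in\G_0$. In your sum the only surviving terms are those with $r(g)=e$, and for these one computes
\begin{align*}
\sum_{i}X_{g,i}\,\beta_{e}\bigl(Y_{g,i}1_{B_e}\bigr)=\beta_g\Bigl(\varphi_{d(g)}\Bigl(\textstyle\sum_i x_iy_i1_{g^{-1}}\Bigr)\Bigr)=\beta_g\bigl(\varphi_{d(g)}(1_{g^{-1}})\bigr)=\varphi_{e}(1_g),
\end{align*}
so the total is $\sum_{g:\,r(g)=e}\varphi_e(1_g)$, a sum of generally overlapping, non-orthogonal idempotents of $B_e$. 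This is not $1_{B_e}$: already when $\alpha$ is itself global (so $A_g=A_e$ for all $g$ with $r(g)=e$) you get $n_e\cdot\varphi_e(1_{A_e})$. Indeed even your claimed collapsed form $\varphi\bigl(\sum_i x_i\alpha_h(y_i1_{h^{-1}})\bigr)$ cannot be right, since it lies in $\varphi(A)$ and the identity of $\varphi_e(A_e)$ is a proper idempotent of $B_e$ in general, whereas the Galois identity for $\beta$ requires the full $1_{B_e}$. The known construction avoids this by first tiling $1_{B_e}$ with the pairwise orthogonal telescoping idempotents $v_{j,e}$ built from the $1_{B_e}-\beta_{g_{j,e}}(\varphi_{d(g_{j,e})}(1_{d(g_{j,e})}))$, and attaching one coordinate to each tile; your diagonal family indexed by all of $\G$ skips exactly this step. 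Relatedly, you have misdiagnosed the role of \eqref{superfluous}: it is not about the domain of applicability of (G3) (that is handled routinely by multiplying by the identities of the ideals $\varphi_e(A_e)$), but about the cross terms, which reduce to $\sum_i x_i\alpha_k(y_i1_{k^{-1}})$ with $k=g_{r(h),j}^{-1}hg_{d(h),j}$; the partial Galois identity kills these only when $k\notin\G_0$, and \eqref{superfluous} is precisely the guarantee that $k\in\G_0$ forces $h\in\G_0$.

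The paper's proof takes a different and much lighter route that you should compare against: it does not rebuild the coordinates at all. It keeps \cite[Theorem 5.1]{bagio2012partial} intact and shows that the hypothesis \eqref{superfluous} can always be arranged, by writing the connected groupoid as $\mathcal{A}_n\times G$ and enumerating each $\G(-,e)$ via a cyclic shift $\sigma\in\mathbb{S}_n$, so that $g_{r(\ell),u}^{-1}\ell g_{d(\ell),u}=(e_{\sigma^r(i)},e_{\sigma^r(j)},g_q^{-1}g_kg_q)$ is an identity only when $\ell$ is. If you want to salvage your direct approach, you would need to reinstate the $v_{j,e}$ decomposition and then still face the cross-term problem that \eqref{superfluous} (or a good enumeration) resolves; your proposed decomposition of $y_i1_{g^{-1}}$ does not address it.
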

\begin{proof}
    The implication (ii) $\Rightarrow$ (i) was proved in \cite[Theorem 5.1]{bagio2012partial}.

    (i) $\Rightarrow$ (ii): Assume that $\G$ is connected. For the general case, it is enough to obtain a convenient enumeration for each connected component of $\G$.

    Since $\G$ is finite and connected, $\G \simeq \mathcal{A}_n \times G$, where $\mathcal{A}_n$ is the coarse groupoid with $n$ identities and $G$ is a finite group with $m$ elements \cite[Proposition 3.3.6]{lawson1998inverse}. Observe that $n_e = |\G(-,e)| = nm$, for all $e \in \G_0$. Now enumerate $G$ as the following: $G = \{1_G = g_0, \ldots, g_{m-1}\}$.  Let $\mathbb{S}_n$ be the group of permutation of $n$ elements. Consider $\sigma \in \mathbb{S}_n$ given by $\sigma(i) = i-1$, for all $1 < i \leq n$, and $\sigma(1) = n$. Hence, given $1 \leq j \leq mn$, there are unique $0 \leq q \leq m-1$ and $0 \leq r \leq n-1$ such that $j = qn + r + 1$. We define, then, $g_{e_i,j} = (e_{\sigma^r(i)},e_i,g_q)$.

    In this enumeration, $g_{e_i,1} = (e_{\sigma^0(i)},e_i,g_0) = (e_i,e_i,1_G) = e_i$. Given $\ell = (e_i,e_j,g_k) \in \G$ and given $1 \leq u = qn + r \leq mn$, we have:
    \begin{align*}
        g_{r(\ell),u}^{-1}\ell g_{d(\ell),u} & = g_{e_j,u}^{-1}\ell g_{e_i,u} = (e_{\sigma^r(j)},e_j,g_q)^{-1}(e_i,e_j,g_k)(e_{\sigma^r(i)},e_i,g_q) \\
        & = (e_j,e_{\sigma^r(j)},g_q^{-1})(e_i,e_j,g_k)(e_{\sigma^r(i)},e_i,g_q) = (e_{\sigma^r(i)},e_{\sigma^r(j)},g_q^{-1}g_kg_q).
    \end{align*}

    Therefore
    \begin{align*}
        g_{r(\ell),u}^{-1}\ell g_{d(\ell),u} \in \G_0 & \Leftrightarrow \sigma^r(i) = \sigma^r(j) \text{ and } g_q^{-1}g_kg_q = 1_G \\
        & \Leftrightarrow i = j \text{ and } g_k = g_qg_q^{-1} = 1_G \Leftrightarrow \ell \in \G_0,
    \end{align*}
    which ends the proof.
\end{proof}

Throughout this section, assume that $\alpha$ is orthogonal and that the ring $A$ is commutative. We fix the notation $\beta = (B_g,\beta_g)_{g\in\G}$ to denote a global action of $\G$ on a ring $B$ that is a globalization of $\alpha$. According to \cite[Theorem 2.1]{bagio2012partial}, it follows that $B = \prod_{e \in \G_0} B_e$, so that $\beta$ is an orthogonal action. To shorten the notation, we will assume $A_e \subseteq B_e$, for all $e \in \G_0$, instead of $\varphi_e(A_e) \subseteq B_e$. We will also denote by $1_g$ the identity of $A_g$ and by $1_g'$ the identity of $B_g$, for all $g \in \G$.

Now we shall prove the first part of the Galois correspondence.

\begin{theorem} \label{teogalpargrp1}
    Let $A$ be a partial $\alpha$-Galois extension of $A^\alpha$ and $\cH$ a wide subgroupoid of $\G$. Then $\alpha|_{\cH}$ is a partial action of $\cH$ on $A$ and $A$ is a partial $\alpha|_{\cH}$-Galois extension of $C = A^{\alpha|_{\cH}}$. Furthermore, $C$ is $A^{\alpha}$-separable, $\alpha$-strong and $\G_C = \cH$.
\end{theorem}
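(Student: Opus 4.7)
The plan is to prove the four assertions in sequence, using the globalization of $\alpha$ to reduce the three nontrivial claims to the corresponding results for global groupoid actions, then transferring back to the partial setting.

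First I would verify that $\alpha|_{\cH}$ is a partial action. Since $\cH$ is wide, one has $\cH_0 = \G_0$, so axiom (P1) for $\alpha|_{\cH}$ follows from (P1) for $\alpha$; axioms (P2) and (P3) are inherited verbatim because $\cH \subseteq \G$ and composition in $\cH$ agrees with that in $\G$ whenever defined. Next I would check that $A$ is a partial $\alpha|_{\cH}$-Galois extension of $C = A^{\alpha|_{\cH}}$ by showing that the same Galois coordinates $\{x_i, y_i\}$ witnessing $A$ as an $\alpha$-Galois extension of $A^\alpha$ also witness the $\alpha|_{\cH}$-Galois property: for every $h \in \cH$,
\begin{align*}
\sum_{i=1}^{n} x_i \alpha_h(y_i 1_{h^{-1}}) = \sum_{e \in \G_0} \delta_{e,h} 1_e = \sum_{e \in \cH_0} \delta_{e,h} 1_e,
\end{align*}
the last equality being the key use of $\cH_0 = \G_0$.

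For the remaining three assertions, my strategy is to pass to the globalization $\beta = (B_g,\beta_g)_{g \in \G}$ of $\alpha$. By Lemma \ref{lemagaloisparcialsseglobal}, $B$ is a $\beta$-Galois extension of $B^\beta$, and $\beta$ is orthogonal. Since $\cH$ is a wide subgroupoid of $\G$, the restriction $\beta|_{\cH}$ is a global action of $\cH$ on $B$, and I would apply the Galois correspondence for global groupoid actions proved in \cite{paques2018galois} to the extension $B \supseteq B^\beta$ and the wide subgroupoid $\cH$. This produces a subring $D := B^{\beta|_{\cH}}$ that is $B^\beta$-separable, $\beta$-strong, and satisfies $\G_D = \cH$ (with $\G_D$ computed relative to $\beta$).

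I would then transfer these conclusions to $C$ over $A^\alpha$ using the embeddings $A_e \hookrightarrow B_e$. The key intermediate facts are: (a) under these embeddings, $A^\alpha = B^\beta \cap A$ and $C = D \cap A$, both of which follow from the globalization axioms (G2)--(G3) combined with the fact that $a \in A^\alpha$ if and only if $\beta_g(a 1_{g^{-1}}) = a 1_g$ for all $g \in \G$; (b) a separability idempotent $\sum_j u_j \otimes v_j \in D \otimes_{B^\beta} D$ yields, after multiplying each tensorand by the central idempotent $1_A = \sum_{e \in \G_0} 1_e \in B$, a separability idempotent $\sum_j (u_j 1_A) \otimes (v_j 1_A) \in C \otimes_{A^\alpha} C$; (c) for $\G_C = \cH$, the inclusion $\cH \subseteq \G_C$ is immediate from the definition of $C$, while the reverse follows by showing that any $g \in \G_C$ fixes $D = B^{\beta|_\cH}$ in the $\beta$-sense (using that $D$ is generated, via the globalization identity (G4) applied to $\cH$, by the $\beta_h$-translates of elements of $C$), hence $g \in \G_D = \cH$; (d) $\alpha$-strongness of $C$ is deduced from $\beta$-strongness of $D$ by taking a witnessing element of $D$ for a given pair $g, h$ and idempotent, then multiplying by $1_A$ to land in $C$ without destroying the separation, which is possible because the relevant idempotent lies in $A_g \cup A_h \subseteq A$.

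The main obstacle will be item (b) of the transfer step, namely checking that the separability idempotent of $D|_{B^\beta}$ survives multiplication by $1_A$ and lands in $C \otimes_{A^\alpha} C$ rather than merely in $D \otimes_{B^\beta} D$; this depends on careful bookkeeping between the central idempotents $1_g$ of $A_g$ and the larger identities $1'_g$ of $B_g$, and on the orthogonal decomposition $A = \bigoplus_{e \in \G_0} A_e$ which guarantees that $1_A$ is a central idempotent of $B$ as well.
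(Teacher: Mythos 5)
Your overall architecture --- restrict the Galois coordinates to get the $\alpha|_{\cH}$-Galois statement, pass to the globalization, invoke the global correspondence of \cite{paques2018galois} for $D = B^{\beta|_{\cH}}$, and transfer back to $A$ --- is the same as the paper's. But the transfer step contains a genuine error. Your item (a), namely $A^\alpha = B^\beta \cap A$ and $C = D \cap A$, is false for genuinely partial actions: membership in $A^\alpha$ means $\beta_g(a1_{g^{-1}}) = a1_g$, involving the unit $1_{g^{-1}}$ of the possibly proper ideal $A_{g^{-1}} \subseteq A_{d(g)}$, whereas membership in $B^\beta$ requires $\beta_g(a1'_{g^{-1}}) = a1'_g$, and $a1'_{g^{-1}} = a1_{d(g)}$ may strictly exceed $a1_{g^{-1}}$; an $\alpha$-invariant element of $A$ need not be $\beta$-invariant. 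The correct relation, taken from \cite[Theorem 4.6]{bagio2012partial}, is that $b \mapsto b1_A$ is an isomorphism of $B^\beta$ \emph{onto} $A^\alpha$, i.e. $A^\alpha = B^\beta 1_A$ --- an image, not an intersection. The analogous identity $C = B^{\beta|_{\cH}}1_A$ is the technical heart of the paper's proof and is not automatic: the paper obtains it by building the globalization $\beta'$ of $\alpha|_{\cH}$ on the subring $B' = \prod_{e}\sum_{h \in \cH,\, r(h)=e}\beta_h(A_{d(h)})$ of $B$ and comparing $(B')^{\beta'}$ with $B^{\beta|_{\cH}}$ through the decomposition $B = B' \oplus B(1_B - 1_{B'})$, checking along the way that $\beta_h$ preserves the complementary summand for $h \in \cH$. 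None of this appears in your proposal, and without it the facts you transfer in (b)--(d) have no foundation. Contrary to your closing assessment, (b) is not the main obstacle; establishing the identification of $C$ is.

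Your step (c) is also flawed. To get $\G_C \subseteq \cH$ you propose showing that any $g \in \G_C$ fixes $D$ globally because ``$D$ is generated, via (G4), by the $\beta_h$-translates of elements of $C$''; but (G4) describes $B_g$ as a sum of translates of the ideals $A_{d(h)}$, not $D = B^{\beta|_{\cH}}$ as generated by translates of $C$, and the assertion that partial invariance on $C$ propagates to global invariance on $D$ is essentially the statement to be proved. The paper argues instead by contradiction: for $g \in \G_C \setminus \cH$, the $\beta$-strongness of $D$ applied to the nonzero idempotent $1_g$ yields $b \in D$ with $\beta_g(b1'_{g^{-1}})1_g \neq b1_g$, and then $b1_A \in C$ contradicts $g \in \G_C$ (this quietly uses $A_g \neq 0$). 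Your items (b) and (d) do contain the right move --- multiply by $1_A$ --- and match the paper's argument once $C = B^{\beta|_{\cH}}1_A$ is in place, except that (d) must also treat the degenerate case $e1_g1_h = 0$ separately, as the paper does.
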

\begin{proof}
    It is easy to see that $\alpha|_{\cH}$ is an orthogonal partial action of $\cH$ on $A$ and that $A$ is a partial $\alpha|_{\cH}$-Galois extension of $C$ with the same partial Galois coordinates of $A$ over $A^\alpha$.

    Consider the globalization $\beta$ of $\alpha$. Define $B_g' = \sum_{\substack{h \in \cH \\ r(h) = r(g)}} \beta_h(A_{d(h)})$ and $\beta_g' = \beta_g|_{B_{g^{-1}}'}$. Then $\beta' = (B_g',\beta_g')_{g\in\cH}$ is a global action  of $\cH$ on $B' = \prod_{e \in \cH_0} B_e'$. In fact, 
    \begin{align*}
        \beta_h'(B_{h^{-1}}') & = \beta_h' \left ( \sum_{\substack{k \in \cH \\ r(k) = d(h)}} \beta_k(A_{d(k)}) \right )  = \sum_{\substack{k \in \cH \\ r(k) = d(h)}} \beta_h(\beta_k(A_{d(k)})) \\
        & = \sum_{\substack{k \in \cH \\ r(k) = d(h)}} \beta_{hk}(A_{d(k)}) = \sum_{\substack{\ell \in \cH \\ r(\ell) = r(h)}} \beta_{\ell}(A_{d(\ell)}) = B_h'.
    \end{align*}

    Since $\beta_h$ is an isomorphism for all $h \in \cH$, it follows that $\beta_h'$ is also an isomorphism. Clearly $\beta_e' = \beta_e|_{B_e'} = \text{Id}_{B_e}|_{B_e'} = \text{Id}_{B_e'}$, for all $e \in \cH_0$, and given $(h,k) \in \cH_2$, $\beta_h' \circ \beta_k' = \beta_h|_{B_{d(h)}'} \circ \beta_k|_{B_{d(k)}'} =  \beta_h|_{B_{r(k)}'} \circ \beta_k|_{B_{d(k)}'} = \beta_h \circ \beta_k|_{B_{d(k)}'} = \beta_{hk}|_{B_{d(hk)}'} = \beta_{hk}'$.
    
    Moreover, $\beta'$ is the globalization of $\alpha|_{\cH}$. Indeed, $A_e \triangleleft B_e'$ by definition, for all $e \in \cH_0$. Furthermore, $A_h = A_{r(h)} \cap \beta_h(A_{d(h)})$, for all $h \in \cH$. In particular, since $A_{d(h)} \subseteq B_{d(h)}'$, it follows that $\beta_h(A_{d(h)}) = \beta_h'(A_{d(h)})$, from where $A_h = A_{r(h)} \cap \beta_h'(A_{d(h)})$, for all $h \in \cH$. By the same reason it follows that $\beta_h'(a) = \alpha_h(a)$, for all $a \in A_{h^{-1}}, h \in \cH$. Finally, the condition (G4) is precisely the definition of $B_h$, for all $h \in \cH$.

    The action $\beta$ of $\G$ on $B$ induces a partial action $\gamma$ of $\G$ on $B'$ via restriction, and $\beta$ is the globalization of $\gamma$. Hence, the proof of \cite[Theorem 4.6]{bagio2012partial} implies that $B^\beta1_{B'} = (B')^{\gamma}$ and $B^{\beta}1_A = A^\alpha$. Therefore, $(B')^\gamma1_A = B^\beta1_{B'}1_A = B^\beta1_A = A^{\alpha}$. Furthermore, $(B')^{\beta'}1_A = A^{\alpha|_{\cH}}$.

    On the other hand, $B = B1_{B'} \oplus B(1_B - 1_{B'}) = B' \oplus B(1_B - 1_{B'})$. We have that $\beta_h(B(1_B - 1_{B'})1_{h^{-1}}') \subseteq B(1_B - 1_{B'})$, for all $h \in \cH$, as
    \begin{align*}
        \beta_h(b(1_B - 1_B')1_{h^{-1}}') & = \beta_h(b1_{h^{-1}}') - \beta_h(b1_{B'}1_{h^{-1}}') = \beta_h(b1_{h^{-1}}') - \beta_h(b1_{h^{-1}}')\beta_h(1_{B'}1_{h^{-1}}') \\
        & = \beta_h(b1_{h^{-1}}') - \beta_h(b1_{h^{-1}}')\beta_h'(1_{h^{-1}}'') = \beta_h(b1_{h^{-1}}') - \beta_h(b1_{h^{-1}}')1_h'' \\
        & = \beta_h(b1_{h^{-1}}')1_B - \beta_h(b1_{h^{-1}}')1_{B'} = \beta_h(b1_{h^{-1}}')(1_B - 1_{B'}).
    \end{align*}
    
    Thus, $B^{\beta|_{\cH}} = (B')^{\beta|_{\cH}} \oplus (B(1_B - 1_{B'}))^{\beta|_{\cH}}$ and $B^{\beta|_{\cH}}1_{B'} = (B')^{\beta|_{\cH}}1_{B'} = (B')^{\beta|_{\cH}} = (B')^{\beta'}$ and hence $C = A^{\alpha|_{\cH}} = (B')^{\beta'}1_A = B^{\beta|_{\cH}}1_{B'}1_A = B^{\beta|_{\cH}}1_A$.

    By Lemma \ref{lemagaloisparcialsseglobal}, we have that $B$ is a $\beta$-Galois extension of $B^\beta$. So, by \cite[Theorem 4.1]{paques2018galois}, the $B^\beta$-subalgebra $B^{\beta|_{\cH}}$ is separable, $\beta$-strong and $\cH = \G_{B^{\beta|_{\cH}}}' := \{g \in \G : \beta_g(b1_{g^{-1}}') = b1_g', \text{ for all } b \in B^{\beta|_{\cH}}\}$. If $e' \in B^{\beta|_{\cH}} \otimes_{B^\beta} B^{\beta|_{\cH}}$ is the idempotent of separability of $B^{\beta|_{\cH}}$ over $B^\beta$, then $e = e'(1_A \otimes 1_A) \in B^{\beta|_{\cH}}1_A \otimes_{B^\beta1_A} B^{\beta|_{\cH}}1_A = A^{\alpha|_{\cH}} \otimes_{A^\alpha} A^{\alpha|_{\cH}} = C \otimes_{A^\alpha} C$ is the idempotent of separability of $C$ over $A^\alpha$.

    Now we prove that $C$ is $\alpha$-strong. Let $g,h \in \G$ be such that $(g^{-1},h) \in \G_2$, $g^{-1}h \notin \G_C$ and $e \in A_g \cup A_h$ a nonzero idempotent. Since $B^{\beta|_{\cH}}$ is $\beta$-strong and $g^{-1}h \notin \G_C \supseteq \cH = \G_{B^{\beta|_{\cH}}}'$, if $e1_g1_h \neq 0$, there is $b \in B^{\beta|_{\cH}}$ such that $\beta_g(b1_{g^{-1}}')e1_g1_h \neq \beta_h(b1_{h^{-1}}')e1_g1_h$. Hence $b1_A \in B^{\beta|_{\cH}}1_A = C$ and $\alpha_g(b1_A1_{g^{-1}})e1_g1_h = \alpha_g(b1_{g^{-1}})e1_g1_h  = \beta_g(b1_{g^{-1}}')e1_g1_h \neq \beta_h(b1_{h^{-1}}')e1_g1_h = \alpha_h(b1_{h^{-1}})e1_g1_h = \alpha_h(b1_A1_h^{-1})e1_g1_h$, from where we obtain $\alpha_g(b1_A1_{g^{-1}})e \neq \alpha_h(b1_A1_{h^{-1}})e$. If $e1_g1_h = 0$ and $e \in A_g$, then $\alpha_g(1_A1_{g^{-1}})e = 1_ge = e \neq 0 = e1_g1_h = e1_h = \alpha_h(1_A1_{h^{-1}})e$. The case $e \in A_h$ is similar.

    Finally, it only remains to show that $\G_C \subseteq \cH$, since we already have that $\cH \subseteq \G_C$. Assume, by contradiction, that there is $g \in \G_C \setminus \cH$. Since $B^{\beta|_{\cH}}$ is $\beta$-strong, there is $b \in B^{\beta|_{\cH}}$ such that $\beta_g(b1_{g^{-1}}')1_g \neq \beta_{r(g)}(b1_{r(g)}')1_g = b1_g$. Hence, $\alpha_g(b1_A1_{g^{-1}}') = \beta_g(b1_{g^{-1}}')1_g \neq b1_g = b1_A1_g$, and it is a contradiction because $g \in \G_C$ and $b1_A \in B^{\beta|_{\cH}}1_A = C$. Thus $\cH$ must coincide with $\G_C$, ending the proof.
\end{proof}

This result proves the first part of the Galois Correspondence theorem. We also observe that Theorem \ref{teogalpargrp1} shows that if $A|_{A^{\alpha}}$ is a partial $\alpha$-Galois extension, then $A$ is $A^{\alpha}$-separable and $\alpha$-strong. We will prove in the next result that the converse also holds.

\begin{lemma}
     $A|_{A^{\alpha}}$ is a partial $\alpha$-Galois extension if and only if $A$ is $A^{\alpha}$-separable and $\alpha$-strong.
\end{lemma}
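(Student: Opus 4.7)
The forward implication follows immediately from Theorem \ref{teogalpargrp1} applied to the wide subgroupoid $\cH = \G_0$. Since $\alpha_e = \mathrm{Id}_{A_e}$ for every $e \in \G_0$, the restriction $\alpha|_{\G_0}$ has fixed ring $A^{\alpha|_{\G_0}} = A$, so that $C = A$ in that theorem and its conclusion specializes precisely to the statement that $A$ is $A^\alpha$-separable and $\alpha$-strong (with, moreover, $\G_A = \G_0$).

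For the converse, my plan is to reduce to the global setting by passing to the globalization $\beta = (B_g,\beta_g)_{g \in \G}$ of $\alpha$ on $B$ provided by Theorem \ref{teoglob}, and then invoke the global Galois correspondence \cite[Theorem 4.1]{paques2018galois}. The key identities $A^\alpha = B^\beta 1_A$ and $A = B 1_A$, already exploited in the proof of Theorem \ref{teogalpargrp1}, together with the decomposition $B = A \oplus B(1_B - 1_A)$, will be the main tools. The argument then proceeds in four stages: first, transport the $A^\alpha$-separability idempotent $\xi \in A \otimes_{A^\alpha} A$ of $A$ to a $B^\beta$-separability idempotent $\widetilde{\xi} \in B \otimes_{B^\beta} B$ of $B$ by gluing its $\beta_g$-translates along the $\beta$-orbits; second, deduce that $\beta$ (as a $B^\beta$-subalgebra of $B$) is $\beta$-strong from the $\alpha$-strongness of $A$, exploiting $B_h = \sum_{r(k) = r(h)} \beta_k(A_{d(k)})$ to pull any nonzero idempotent of $B_g \cup B_h$ back to a nonzero idempotent of some $A_{g_0} \cup A_{h_0}$ with $r(g_0) = r(h_0)$, and then transporting the element witnessing $\alpha$-separation via $\beta$; third, apply the global analogue to conclude that $B$ is a $\beta$-Galois extension of $B^\beta$; and fourth, invoke Lemma \ref{lemagaloisparcialsseglobal} to descend back, obtaining that $A$ is a partial $\alpha$-Galois extension of $A^\alpha$.

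The main obstacle will be the first stage. The tensor products in the two separability conditions are taken over different base rings, the units $1_A$ and $1_B$ differ, and — most critically — the isomorphisms $\beta_g$ do not in general preserve the summand $A$ of the decomposition of $B$, since by (G2) one has $\beta_g(A_{d(g)}) \cap A_{r(g)} = A_g$, which is merely an ideal of $\beta_g(A_{d(g)})$. A naive componentwise construction therefore fails, and a careful patching across the connected components of $\G$ is required to ensure that the balancing condition $\widetilde{\xi}(1 \otimes b - b \otimes 1) = 0$ holds for every $b \in B$, not just for $b \in A$. Stage two is more routine but still needs care, as one must verify that the idempotents entering the $\beta$-strong condition on $B$ can always be chosen to have nonzero components in some $A_{g_0}$, which is where $1_A$ acts as a faithful cutoff by the orthogonality assumption.
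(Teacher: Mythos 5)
Your forward direction is exactly the paper's: Theorem \ref{teogalpargrp1} with $\cH = \G_0$ gives $C = A^{\alpha|_{\G_0}} = A$, hence $A$ is $A^\alpha$-separable and $\alpha$-strong. That part is fine.

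The converse, however, has a genuine gap precisely at the step you yourself flag as the ``main obstacle'': transporting a separability idempotent $\xi \in A \otimes_{A^\alpha} A$ to a separability idempotent $\widetilde{\xi} \in B \otimes_{B^\beta} B$. You describe a plan (``gluing $\beta_g$-translates along orbits'') and then acknowledge that the naive construction fails and that ``careful patching is required,'' but you never supply that patching, and it is not at all clear it can be supplied. The difficulty is not merely bookkeeping: in the Galois setting the separability idempotent of $B$ over $B^\beta$ is essentially built from Galois coordinates, so any honest construction of $\widetilde{\xi}$ from $\xi$ is in danger of presupposing the Galois property you are trying to establish. Concretely, a candidate such as $\sum_g (\beta_g \otimes \beta_g)\bigl(\xi \cdot (1_{d(g)} \otimes 1_{d(g)})\bigr)$ has no reason to be idempotent, to multiply to $1_B$, or to satisfy $\widetilde{\xi}(1_B \otimes b - b \otimes 1_B) = 0$ for $b \in B \setminus A$, since $\beta_g$ does not preserve $A$ and the balancing is over $B^\beta$ rather than $A^\alpha$. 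Stage three is also shakier than you suggest: \cite[Theorem 4.1]{paques2018galois} concerns subalgebras of an extension already assumed Galois, so it cannot by itself convert ``$B$ separable and $\beta$-strong over $B^\beta$'' into ``$B$ is $\beta$-Galois over $B^\beta$''; you would need a separate global characterization theorem.

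The paper avoids all of this with a short direct argument that you should compare against. Given the separability idempotent $e = \sum_{i=1}^n x_i \otimes y_i$ of $A$ over $A^\alpha$, set $e_g = \sum_{i=1}^n x_i \alpha_g(y_i 1_{g^{-1}}) \in A_g$. Since $A$ is commutative, the multiplication map is a ring homomorphism, so $e_g$ is an idempotent, and the relation $(a \otimes 1_A - 1_A \otimes a)e = 0$ yields $a\,e_g = \alpha_g(a 1_{g^{-1}})\,e_g$ for all $a \in A$. The $\alpha$-strong hypothesis (applied to the pair $g, r(g)$ and the idempotent $e_g$) then forces $e_g = 0$ for $g \notin \G_0$, while $e_g = \sum_i x_i y_i 1_{r(g)} = 1_{r(g)}$ for $g \in \G_0$. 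Thus $\{x_i, y_i\}_{i=1}^n$ is already a system of partial $\alpha$-Galois coordinates, with no globalization needed. I recommend you abandon the detour through $B$ and adopt this direct route.
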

\begin{proof}
    By the discussion above, it only remains to prove that if $A$ is $A^{\alpha}$-separable and $\alpha$-strong, then $A$ is partial $\alpha$-Galois over $A^{\alpha}$.

    For that, let $e = \sum_{i=1}^n x_i \otimes y_i \in A \otimes_{A^{\alpha}} A$ be the idempotent of separability of $A$. That is, $\mu(e) = \sum_{i=1}^n x_iy_i = 1_A$ and $(a \otimes 1_A - 1_A \otimes a)e = 0$, for all $a \in A$, where $\mu : A \otimes_{A^{\alpha}} A$ is the multiplication map. For all $g \in \G$, define $e_g = m((\text{Id}_A \otimes \alpha_g(-1_{g^{-1}}))(e)) \in A_g$. We have that $\text{Id}_A \otimes \alpha_g(-1_{g^{-1}})$ is an endomorphism of $A \otimes_{A^\alpha} A$. Besides that, $\mu$ is a ring homomorphism, since $A$ is commutative. Hence, for all $g \in \G$ we have
    \begin{align*}
        e_g^2 & = \mu((\text{Id}_A \otimes \alpha_g(-1_{g^{-1}}))(e)) \cdot \mu((\text{Id}_A \otimes \alpha_g(-1_{g^{-1}}))(e)) \\
        & = \mu((\text{Id}_A \otimes \alpha_g(-1_{g^{-1}}))(e) \cdot (\text{Id}_A \otimes \alpha_g(-1_{g^{-1}}))(e)) \\
        & = \mu((\text{Id}_A \otimes \alpha_g(-1_{g^{-1}}))(e^2)) = \mu((\text{Id}_A \otimes \alpha_g(-1_{g^{-1}}))(e)) = e_g.
    \end{align*}

   That is, $e_g$ is an idempotent in $A_g$. On the one hand, for all $a \in A$, we get
    \begin{align*}
        \alpha_{r(g)}(a1_{r(g)})e_g & = ae_g = a\mu((\text{Id}_A \otimes \alpha_g(-1_{g^{-1}}))(e)) = a\mu((\text{Id}_A \otimes \alpha_g(-1_{g^{-1}}))(e))1_g \\
        & = (a \otimes 1_g) \cdot \mu((\text{Id}_A \otimes \alpha_g(-1_{g^{-1}}))(e)) = \mu((a \otimes 1_g) \cdot (\text{Id}_A \otimes \alpha_g(-1_{g^{-1}}))(e)) \\
        & = \mu((\text{Id}_A \otimes \alpha_g(-1_{g^{-1}}))(a \otimes 1_A) \cdot (\text{Id}_A \otimes \alpha_g(-1_{g^{-1}}))(e)) \\
        & = \mu((\text{Id}_A \otimes \alpha_g(-1_{g^{-1}}))((a \otimes 1_A)e)) = \mu((\text{Id}_A \otimes \alpha_g(-1_{g^{-1}}))((1_A \otimes a)e)) \\
        & = \mu((\text{Id}_A \otimes \alpha_g(-1_{g^{-1}}))(1_A \otimes a) \cdot (\text{Id}_A \otimes \alpha_g(-1_{g^{-1}}))(e)) \\
        & = \mu((1_A \otimes \alpha_g(a1_{g^{-1}})) \cdot (\text{Id}_A \otimes \alpha_g(-1_{g^{-1}}))(e)) \\
        & = \mu((1_A \otimes \alpha_g(a1_{g^{-1}}))) \cdot \mu((\text{Id}_A \otimes \alpha_g(-1_{g^{-1}}))(e)) \\
        & = \alpha_g(a1_{g^{-1}})\mu((\text{Id}_A \otimes \alpha_g(-1_{g^{-1}}))(e)) = \alpha_g(a1_{g^{-1}})e_g.
    \end{align*}

    On the other hand, since $A$ is $\alpha$-strong, we have that if $g \notin \G_0$, then we should have $e_g = 0$. But
    \begin{align*}
        e_g & = \mu((\text{Id}_A \otimes \alpha_g(-1_{g^{-1}}))(e)) = \sum_{i=1}^n x_i\alpha_g(y_i1_{g^{-1}}).
    \end{align*}

    If $g \in \G_0$, then $g = r(g)$ and
    \begin{align*}
        e_g & = \mu((\text{Id}_A \otimes \alpha_{r(g)}(-1_{r(g)}))(e)) = \sum_{i=1}^n x_i\alpha_{r(g)}(y_i1_{r(g)}) \\
        & = \sum_{i=1}^n x_iy_i1_{r(g)}  = 1_A1_{r(g)} = 1_{r(g)}.
    \end{align*}

    Therefore $\{x_i,y_i\}_{i=1}^n$ is a partial $\alpha$-Galois coordinate system of $A$ over $A^{\alpha}$.
\end{proof}

Before we prove the converse of Theorem \ref{teogalpargrp1}, we need a technical lemma.

\begin{lemma} \label{lemacoord}
    Let $A$ be a partial $\alpha$-Galois extension of $A^\alpha$ and $C$ a separable, $\alpha$-strong $A^\alpha$-subalgebra of $A$. Then there exist $x_i,y_i \in C$, $1 \leq i \leq n$, such that $\sum_{i = 1}^n x_iy_i = 1_A$ and $\sum_{i = 1}^n x_i \alpha_g(y_i1_{g^{-1}}) = 0$, for all $g \in \G \setminus \G_C$.
\end{lemma}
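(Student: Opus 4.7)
The plan is to mimic the tactic from the preceding lemma: produce, for each $g \in \G$, an idempotent $e_g \in A_g$ out of the separability idempotent of $C$, establish a twisted-invariance identity relating $e_g$ to the action, and then use the $\alpha$-strong hypothesis to force $e_g = 0$ whenever $g \notin \G_C$. I start by invoking the $A^\alpha$-separability of $C$ to fix an idempotent of separability $e = \sum_{i=1}^n x_i \otimes y_i \in C \otimes_{A^\alpha} C$, which by definition satisfies $\sum_{i=1}^n x_i y_i = 1_C = 1_A$ (this is already the first equality required by the lemma) together with $(c \otimes 1_A - 1_A \otimes c)\,e = 0$ for every $c \in C$.

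Next I set $e_g = \sum_{i=1}^n x_i \alpha_g(y_i 1_{g^{-1}}) \in A_g$ and verify two properties. First, $e_g$ is an idempotent: this is the same computation used in the immediately preceding lemma, depending only on the commutativity of $A$ and on the fact that $\mu$ and $\mathrm{Id}_A \otimes \alpha_g(-\,1_{g^{-1}})$ are ring homomorphisms on $A \otimes_{A^\alpha} A$. Second, the twisted identity
\[
c\, e_g \;=\; \alpha_g(c\,1_{g^{-1}})\, e_g \qquad \text{for every } c \in C
\]
holds: it is obtained by applying $\mathrm{Id}_A \otimes \alpha_g(-\,1_{g^{-1}})$ to the equality $(c \otimes 1_A)\,e = (1_A \otimes c)\,e$ and then applying $\mu$, exactly as in the analogous calculation used to prove the converse Galois-coordinate formula earlier.

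Finally, I fix $g \in \G \setminus \G_C$ and claim $e_g = 0$. A preliminary observation is that $g \notin \G_C$ implies $g^{-1} \notin \G_C$: any $c \in C$ with $\alpha_g(c\,1_{g^{-1}}) \neq c\,1_g$ produces $\alpha_{g^{-1}}(c\,1_g) \neq c\,1_{g^{-1}}$ upon applying $\alpha_{g^{-1}}$ (which is injective on $A_g$). Assume for contradiction that $e_g \neq 0$, and apply the $\alpha$-strong property to the pair $(g, r(g))$: they share range $r(g)$, the composite $g^{-1} r(g) = g^{-1}$ does not belong to $\G_C$ by the observation above, and $e_g$ is a nonzero idempotent in $A_g$. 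Hence there exists $c \in C$ with
\[
\alpha_g(c\,1_{g^{-1}})\, e_g \;\neq\; \alpha_{r(g)}(c\,1_{r(g)})\, e_g \;=\; c\,1_{r(g)}\, e_g \;=\; c\, e_g,
\]
contradicting the twisted identity from the previous paragraph. Therefore $e_g = 0$, which is the second conclusion of the lemma. The only non-mechanical choice in the whole argument is the selection $h = r(g)$ in the $\alpha$-strong axiom; everything else is bookkeeping around the separability idempotent.
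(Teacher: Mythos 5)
Your proof is correct and follows essentially the same route as the paper: build $e_g = \sum_i x_i\alpha_g(y_i1_{g^{-1}})$ from the separability idempotent, verify idempotency and the twisted identity $c\,e_g = \alpha_g(c1_{g^{-1}})e_g$, and invoke $\alpha$-strongness to kill $e_g$ for $g \notin \G_C$. The only difference is that you make explicit the choice $h = r(g)$ in the $\alpha$-strong axiom and the fact that $g \notin \G_C$ forces $g^{-1} \notin \G_C$, details the paper leaves implicit.
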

\begin{proof}
    Let $e = \sum_{i=1}^n x_i \otimes y_i \in C \otimes_{A^{\alpha}} C$ be the idempotent of separability of $C$ over $A^\alpha$. By definition we have that $\sum_{i=1}^n x_iy_i = 1_A$. Take $g \in \G$ and define $e_{g} = \sum_{i=1}^n x_i\alpha_g(y_i1_{g^{-1}}) \in A_g$, which is an idempotent since $e_g = \mu(\theta_g(e))$, where $\theta_g : C \otimes C \to C \otimes A_g$ is the $C$-algebra homomorphism given by $\theta_g \left ( \sum_{i=1}^m a_i \otimes b_i \right) = \sum_{i=1}^m a_i\alpha_g(b_i1_{g^{-1}})$ and $\mu : C \otimes C \to C$ is the product map, which is a homomorphism since $C$ is commutative.

    Observe now that $ae_g = \alpha_g(a1_g^{-1})e_g$, for all $a \in C$. In fact,
    \begin{align*}
        ae_g & = ae_g1_g = a\mu(\theta_g(e))1_g = a \mu((\text{Id}_C \otimes \alpha_g(-1_{g^{-1}}))(e))1_g \\
        & = a \mu((\text{Id}_C \otimes \alpha_g(-1_{g^{-1}}))(e)) 1_g = (a \otimes 1_g) \cdot \mu((\text{Id}_C \otimes \alpha_g(-1_{g^{-1}}))(e)) \\
        & = \mu((a \otimes 1_g) \cdot (\text{Id}_C \otimes \alpha_g(-1_{g^{-1}}))(e)) \\
        & = \mu((\text{Id}_C \otimes \alpha_g(-1_{g^{-1}}))(a \otimes 1_A) \cdot (\text{Id}_C \otimes \alpha_g(-1_{g^{-1}}))(e)) \\
        & = \mu((\text{Id}_C \otimes \alpha_g(-1_{g^{-1}}))((a \otimes 1_A) \cdot e)) \\
        & = \mu((\text{Id}_C \otimes \alpha_g(-1_{g^{-1}}))((1_A \otimes a) \cdot e)) \\
        & = \mu((\text{Id}_C \otimes \alpha_g(-1_{g^{-1}}))(1_A \otimes a) \cdot (\text{Id}_C \otimes \alpha_g(-1_{g^{-1}}))(e)) \\
        & = \mu((1_A \otimes \alpha_g(a1_{g^{-1}}) \cdot (\text{Id}_C \otimes \alpha_g(-1_{g^{-1}}))(e)) \\
        & = (1_A \otimes \alpha_g(a1_{g^{-1}}) \cdot \mu((\text{Id}_C \otimes \alpha_g(-1_{g^{-1}}))(e)) \\
        & = \alpha_g(a1_{g^{-1}})\mu((\text{Id}_C \otimes \alpha_g(-1_{g^{-1}}))(e)) = \alpha_g(a1_{g^{-1}})e_g.
        \end{align*}

    Since $C$ is $\alpha$-strong, it follows that $e_g = 0$ unless $g \in \G_C$.
\end{proof}

\begin{theorem} \label{teogalpargrp2}
    Let $A$ be a partial $\alpha$-Galois extension of $A^\alpha$ and $C$ a separable, $\alpha$-strong $A^\alpha$-subalgebra of $A$ such that $\G_C$ is a wide subgroupoid of $\G$. Then $A^{\alpha|_{\G_C}} = C$.
\end{theorem}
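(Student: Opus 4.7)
The inclusion $C \subseteq A^{\alpha|_{\G_C}}$ is immediate from the definition of $\G_C$: every $c \in C$ satisfies $\alpha_g(c 1_{g^{-1}}) = c 1_g$ for each $g \in \G_C$, hence $c \in A^{\alpha|_{\G_C}}$.

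For the reverse inclusion, my plan is to combine Lemma \ref{lemacoord} with the globalization machinery. Applying Lemma \ref{lemacoord} produces $x_i, y_i \in C$ with $\sum_i x_i y_i = 1_A$ and $e_g := \sum_i x_i \alpha_g(y_i 1_{g^{-1}}) = 0$ for every $g \notin \G_C$. Given $a \in A^{\alpha|_{\G_C}}$, the pointwise computation
\[
\sum_i x_i \alpha_g(y_i a 1_{g^{-1}}) \;=\; e_g \, \alpha_g(a 1_{g^{-1}})
\]
evaluates to $a\, e_g$ when $g \in \G_C$ (using the $\G_C$-invariance of both $y_i$ and $a$) and to $0$ when $g \notin \G_C$ (by the vanishing of $e_g$). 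These per-component identities are the main computational input.

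To assemble them into the statement $a \in C$, I would pass to the globalization $\beta$ of $\alpha$ on the ring $B$. By Lemma \ref{lemagaloisparcialsseglobal}, $B$ is $\beta$-Galois over $B^\beta$, and the proof of Theorem \ref{teogalpargrp1} establishes $A^{\alpha|_{\G_C}} = B^{\beta|_{\G_C}} \cdot 1_A$. The plan is then to construct a lift $\tilde C \subseteq B$ of $C$ (the natural candidate being $\tilde C := B^\beta[C]$, the $B^\beta$-subalgebra of $B$ generated by $C \subseteq A \subseteq B$) satisfying (a) $\tilde C \cdot 1_A = C$; (b) $\tilde C$ is $B^\beta$-separable and $\beta$-strong; and (c) $\G'_{\tilde C} = \G_C$, where $\G'_{\tilde C} := \{g \in \G : \beta_g(b 1_{g^{-1}}') = b 1_g' \text{ for all } b \in \tilde C\}$. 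Once these are verified, the Galois correspondence for global actions \cite[Theorem 4.1]{paques2018galois} applied to $(B, \beta)$ forces $\tilde C = B^{\beta|_{\G_C}}$, and multiplying by $1_A$ yields $C = \tilde C \cdot 1_A = B^{\beta|_{\G_C}} \cdot 1_A = A^{\alpha|_{\G_C}}$.

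Property (a) follows from commutativity of $B$ together with the identification $B^\beta \cdot 1_A = A^\alpha \subseteq C$, and the separability half of (b) is inherited from that of $C$ via tensoring the separability idempotent $\sum_i x_i \otimes y_i$ with $B^\beta$ over $A^\alpha$. The principal obstacle is verifying (c) and the $\beta$-strongness of $\tilde C$; these require translating the $\alpha$-strong property of $C$ into the $B^\beta$-context, using the globalization identity $\beta_g|_{A_{g^{-1}}} = \alpha_g$ (property G3) and, crucially, the fact that each $e_g$ from Lemma \ref{lemacoord} vanishes outside $\G_C$, which is what prevents spurious group elements from fixing $\tilde C$.
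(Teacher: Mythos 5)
Your high-level architecture --- lift $C$ to a subalgebra of the globalization $B$, verify separability, $\beta$-strongness and the correct stabilizer, invoke the global correspondence of Paques--Tamusiunas, then cut back down by $1_A$ --- is exactly the paper's strategy. The gap is in the choice of lift: $\tilde{C} = B^\beta[C] = B^\beta + C$ does not work, for two concrete reasons. First, the stabilizer condition (c) fails: for $c \in C$ and $g \in \G_C$ one only knows $\beta_g(c1_{g^{-1}}) = \alpha_g(c1_{g^{-1}}) = c1_g$, i.e.\ invariance \emph{after} cutting by the idempotent $1_{g^{-1}}$ of $A_{g^{-1}}$; the global condition $\beta_g(c1_{g^{-1}}') = c1_g'$ requires controlling $\beta_g(c(1_{d(g)} - 1_{g^{-1}}))$, which lands outside $A_{r(g)}$ and has no reason to equal $c(1_{r(g)} - 1_g)$. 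So for a properly partial $\alpha$ one gets $\G'_{\tilde{C}} \subsetneq \G_C$ and the global correspondence returns the wrong subgroupoid. Second, $\beta$-strongness fails on the complement of $B' = \prod_e \sum_{g \in \G_C,\, r(g)=e}\beta_g(A_{d(g)})$: since $C \subseteq A \subseteq B'$, one has $\tilde{C}(1_B - 1_{B'}) = B^\beta(1_B - 1_{B'})$, on which every $\beta_g$ acts trivially, so no element of $\tilde{C}$ can separate $\beta_g$ from $\beta_h$ against a nonzero idempotent supported in $B(1_B - 1_{B'})$.

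The paper repairs both defects at once. Instead of embedding $C$ directly, it pushes $C$ forward along the homomorphism $\psi_{\G_C} : A \to B'$ of \cite[Theorem 4.6]{bagio2012partial} (which spreads an element over its $\G_C$-orbit via the idempotents $v_{j,e}$), obtaining $C' = \psi_{\G_C}(C)$ that is genuinely $\beta|_{\G_C}$-stable inside $B'$; and it completes the lift to $\overline{C} = C' \oplus B^{\beta|_{\G_C}}(1_B - 1_{B'})$, where the second summand supplies the separating elements $u_j, v_j \in B^{\beta|_{\G_C}}$ needed for $\beta$-strongness on the complement of $B'$. Your use of Lemma \ref{lemacoord} is on target --- the paper's Claim~1 is precisely the transport of those coordinates through $\psi_{\G_C}$, and they are what kill the component $e_1 = e1_{B'}$ in the strongness argument --- but the preliminary identity $\sum_i x_i\alpha_g(y_i a 1_{g^{-1}}) = e_g\alpha_g(a1_{g^{-1}})$ plays no role in the final assembly and does not substitute for the construction of $\psi_{\G_C}$ and $\overline{C}$. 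As written, step (c) and the strongness half of (b) would fail for your $\tilde{C}$, so the proof does not go through without replacing the lift.
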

\begin{proof}
    We always have that $C \subseteq A^{\alpha|_{\G_C}}$. Our goal is to prove the reverse inclusion. Let $\beta$ be the globalization of $\alpha$. Consider the subalgebra
    \begin{align*}
        B' = \prod_{e \in \G_0} \left ( \sum_{\substack{g \in \G_C \\ r(g) = e}} \beta_g(A_{d(g)}) \right ),
    \end{align*}
    where $\G_C$ acts via $\beta'$ as in the proof of Theorem \ref{teogalpargrp1} and $\beta'$ is the globalization of the orthogonal partial action $\alpha|_{\G_C}$ of $\G_C$ on $A$.

    By \cite[Theorem 4.6]{bagio2012partial}, there is a ring homomorphism $\psi_{\G_C} : A \to B'$ such that $\psi_{\G_C}|_{A^{\alpha|_{\G_C}}} : A^{\alpha|_{\G_C}} \to (B')^{\beta'}$ is a ring homomorphism and $\psi_{\G_C}(a)1_A = a$, for all $a \in A^{\alpha|_{\G_C}}$. Consider $C' = \psi_{\G_C}(C)$.

    \noindent \textbf{Claim 1:} There exist $x_i', y_i' \in C'$, $1 \leq i \leq n$, such that $\sum_{i=1}^n x_i'y_i' = 1_{B'}$ and such that  $\sum_{i=1}^n x_i'\beta_g'(y_i'1_{g^{-1}}') = 0$, for all $g \in \G \setminus \G_C$.

    In fact, by Lemma \ref{lemacoord}, there exist $x_i,y_i \in C$, $i \leq i \leq n$, such that $\sum_{i=1}^n x_iy_i = 1_A$ and $\sum_{i=1}^n x_i\alpha_{g}(y_i1_{g^{-1}}) = 0$, for all $g \in \G \setminus \G_C$. Since $x_i, y_i \in C \subseteq A$, given $g \in \G \setminus \G_C$, we have that
    \begin{align*}
        \sum_{i=1}^n x_i\beta_g(y_i1_{g^{-1}}') & = \sum_{i=1}^n x_i\beta_g(y_i1_A1_{g^{-1}}') = \sum_{i=1}^n x_i\beta_g(y_i1_{g^{-1}}) = \sum_{i=1}^n x_i\alpha_g(y_i1_{g^{-1}}) = 0.
    \end{align*}

    Take $x_i' = \psi_{\G_C}(x_i)$ and $y_i' = \psi_{\G_C}(y_i)$. Then $\sum_{i=1}^n x_i'y_i' = \psi_{\G_C} \left ( \sum_{i=1}^n x_iy_i \right ) = \psi_{\G_C}(1_A) = 1_{B'}$. Consider now $e \in \G_0$ and $\{h_{1,e} = e, h_{2,e}, \ldots, h_{m_e,e}\} = \G_C(-,e) = \{g \in \G_C : r(g) = e\}$, where $m_e > 0$. By \cite[pg. 15]{bagio2012partial}, $1_e' = v_{1,e} + \cdots + v_{m_e,e}$, where $v_{1,e} = 1_e$ and $v_{j,e} = (1_{e}' - 1_e)(1_e' - \beta_{h_{2,e}}(1_{d(h_{2,e})})) \cdots (1_e' - \beta_{h_{j-1,e}}(1_{d(h_{j-1,e})}))\beta_{h_{j,e}}(1_{d(h_{j,e})})$. 
    
    Defining $\psi_e : A \to B'$ by $\psi_e(a) = \sum_{j=1}^{m_e} \beta_{h_{j,e}}(a_{d(h_{j,e})}1_{h_{j,e}^{-1}}') v_{j,e}$, we have that $\psi_{\G_C} = \sum_{e \in \G_0} \psi_e$. Hence, for all $g \in \G \setminus \G_C$, we have that
    
    \begin{align*}
        & \sum_{i=1}^n x_i'\beta_g(y_i'1_{g^{-1}}') \\
        & = \sum_{i=1}^n \sum_{e \in \G_0} \sum_{1 \leq j,k \leq m_e} \beta_{h_{j,e}}((x_i)_{d(h_{j,e})}1_{h_{j,e}^{-1}}')v_{j,e}\beta_g(\beta_{h_{k,e}}((y_i)_{d(h_{k,e})}1_{h_{k,e}^{-1}}')v_{k,e}1_{g^{-1}}') \\
        & = \sum_{i=1}^n \sum_{1 \leq j,k \leq m_{r(g)}} \beta_{h_{j,r(g)}}((x_i)_{d(h_{j,r(g)})}1_{h_{j,r(g)}^{-1}}')v_{j,r(g)}\beta_g(\beta_{h_{k,d(g)}}((y_i)_{d(h_{k,d(g)})}1_{h_{k,d(g)}^{-1}}')v_{k,d(g)}) \\
        & = \sum_{1 \leq j,k \leq m_{r(g)}} v_{j,r(g)}\beta_g(v_{k,d(g)}) \beta_{h_{j,r(g)}} \left ( \sum_{i=1}^n (x_i)_{d(h_{j,r(g)})}\beta_{h_{j,r(g)}^{-1}gh_{k,d(g)}}((y_i)_{d(h_{k,d(g)})}1_{h_{k,d(g)}^{-1}}') \right ) \\
        & = \sum_{1 \leq j,k \leq m_{r(g)}} v_{j,r(g)}\beta_g(v_{k,d(g)}) \beta_{h_{j,r(g)}} \left ( \sum_{i=1}^n x_i\beta_{h_{j,r(g)}^{-1}gh_{k,d(g)}}(y_i 1_{h_{k,d(g)}^{-1}}') \right ) = 0,
    \end{align*}
    because if $h_{j,r(g)}^{-1}gh_{k,d(g)} \in \G_C$, then there is $h \in \G_C$ such that $h_{j,r(g)}^{-1}gh_{k,d(g)} = h$. But then $g = h_{j,r(g)}hh_{k,d(g)}^{-1}$, from where it follows that $g \in \G_C$, a contradiction. This completes the proof of Claim 1.

    Now, since $\G_C \subseteq \G_{C'}$ and the elements $x_i',y_i' \in C'$, we have, in particular, that $\G_{C'} = \G_C$.

    Consider again the restriction $\gamma$ of $\beta$ to $B'$. It follows from \cite[Theorem 4.6]{bagio2012partial} that there is a ring isomorphism $B^\beta \to (B')^\gamma$ given by $x \mapsto x1_{B'}$. Moreover, $x \mapsto x1_A$ is an isomorphism of $B^\beta$ on $A^\alpha$. Hence, there is an isomorphism $(B')^\gamma \to A^\alpha$ given by $x1_{B'} \mapsto x1_A$, whose inverse is $\psi_{\G_C}|_{A^{\alpha}}$. Therefore $\psi_{\G_C}(A^{\alpha}) = (B')^{\gamma}$ and hence $C' = \psi_{\G_C}(C)$ is separable over $(B')^\gamma$.

    Notice that $B^{\beta|_{\G_C}} = B^{\beta|_{\G_C}}1_{B'} \oplus B^{\beta|_{\G_C}}(1_B - 1_{B'}) = (B1_{B'})^{\beta|_{\G_C}} \oplus B^{\beta|_{\G_C}}(1_B - 1_{B'}) = (B')^{\beta|_{\G_C}} \oplus B^{\beta|_{\G_C}}(1_B - 1_{B'}) = (B')^{\beta'} \oplus B^{\beta|_{\G_C}}(1_B - 1_{B'})$. In particular, $B^{\beta|_{\G_C}}1_A = (B')^{\beta'}1_A = A^{\alpha|_{\G_C}}$. Consider the subalgebra $\overline{C} = C' \oplus B^{\beta|_{\G_C}}(1_{B} - 1_{B'})$ of $B^{\beta|_{\G_c}}$.

    \noindent \textbf{Claim 2:} $\overline{C}$ is $B^\beta$-separable and $\beta$-strong.

    Indeed, $B^{\beta|_{\G_C}}$ is separable over $B^\beta$ by \cite[Theorem 4.1(ii)]{paques2018galois}, from where it follows that $B^{\beta|_{\G_C}}(1_B - 1_{B'})$ is separable over $B^{\beta}(1_B - 1_{B'})$. Since $C'$ is $(B')^\gamma$-separable and $(B')^\gamma = B^\beta1_{B'}$, it follows that $\overline{C}$ is separable over $(B')^\gamma \oplus B^\beta(1_B - 1_{B'}) = B^\beta$.

    To prove that $\overline{C}$ is $\beta$-strong, assume that $g \in \G \setminus \G_C$, that $e \in B_g$ is an idempotent and that $\beta_g((x+y)1_{g^{-1}}')e = (x+y)e$, for all $x \in C'$ and $y \in B^{\beta|_{\G_C}}(1_B - 1_{B'})$. 

    Write $e = e_1 + e_2$, with $e_1 = e1_{B'}$ and $e_2 = e(1_{B} - 1_B')$. Observe that $e_1, e_2 \in B_g$. Then multiplying $\beta_{g}((x+y)1_{g^{-1}}')e = (x+y)e$ by $1_{B'}$, we obtain $\beta_g((x+y)1_{g^{-1}})e_1 = (x+y)e_1 = xe_1$, for all $x \in C'$ and $y \in B^{\beta|_{\G_C}}(1_B - 1_{B'})$. In particular, when $y = 0$ we have that $\beta_g(x1_{g^{-1}}')e_1 = xe_1$, for all $x \in C'$.

    By Claim 1, there are $x_i',y_i' \in C'$, $1 \leq i \leq n$, such that $\sum_{i=1}^n x_i'y_i' = 1_{B'}$ and such that $\sum_{i = 1}^n x_i'\beta_g(y_i'1_{g^{-1}}') = 0$, for all $g \in \G \setminus \G_C$. Thus $0 = \sum_{i = 1}^n x_i'\beta_g(y_i'1_{g^{-1}}')e_1 = \sum_{i=1}^n x_i'y_i'e_1 = 1_{B'}e_1 = e_1$. So we obtain $e = e_2$ and $\beta_g((x+y)1_{g^{-1}}')e_2 = (x+y)e_2 = ye_2$, for all $x \in C'$ and $y \in B^{\beta|_{\G_C}}(1_B - 1_{B'})$. Considering $x = 0$, we obtain $\beta_g(y1_{g^{-1}}')e_2 = ye_2$, for all $y \in B^{\beta|_{\G_C}}(1_B - 1_{B'})$. Since $B^{\beta|_{\G_C}}$ is $\beta$-strong and separable over $B^\beta$, it follows that there are $u_j, v_j \in B^{\beta|_{\G_C}}$, $1 \leq j \leq m$, such that $\sum_{j=1}^m u_jv_j = 1_{B}$ and $\sum_{j=1}^m u_j\beta_g(v_j1_{g^{-1}}') = 0$, for all $g \in \G \setminus \G_C$. Thus $0 = \sum_{j=1}^m u_j\beta_g(v_j(1_B - 1_{B'})1_{g^{-1}}')e_2 = \sum_{j=1}^m u_jv_j\beta_g((1_B - 1_{B'})1_{g^{-1}}')e_2 = 1_{B}(1_g' - 1_g'')e_2 = (1_B - 1_{B'})e_2 = e_2$, from where it follows that $e = 0$, that is, Claim 2 holds.

    Now, by Claim 2 and \cite[Theorem 4.5]{paques2018galois}, $\overline{C} = B^{\beta|_{\G_{\overline{C}}'}}$, where $\G_{\overline{C}}' = \{ g \in \G : \beta_h(x1_{h^{-1}}') = x1_h', \text{ for all } x \in \overline{C}\}$. Besides that, by definition of $\overline{C}$, $g \in \G$ is in $\G_{\overline{C}} = \G_{\overline{C}}'$ if and only if $g \in \G_{C'} = \G_C$, from where $\G_C = \G_{\overline{C}}$. It follows that $\overline{C} = B^{\beta|_{\G_C}}$ and $A^{\alpha|_{\G_C}} = (B')^{\beta'}1_A = B^{\beta|_{\G_C}}1_{B'}1_A = B^{\beta|_{\G_C}}1_A = \overline{C} 1_A = C' 1_A = C$.
    \end{proof}

    Now we can state the main result of this section. This theorem generalizes the Galois Correspondence for orthogonal groupoid actions \cite[Theorem 4.6]{paques2018galois}, for orthogonal group-type partial groupoid actions \cite[Theorem 5.7]{bagio2022galois} and for partial group actions \cite[Theorem 5.1]{dokuchaev2007partial}.

    \begin{theorem}[Galois Correspondence for Orthogonal Partial Groupoid Actions]\label{teofund}
        Let $\G$ be a finite groupoid acting partially on a ring $A$ via unital and orthogonal partial action $\alpha = (A_g,\alpha_g)_{g \in \G}$ such that $A_g \neq 0$, for all $g \in \G$. Assume that $A$ is a partial $\alpha$-Galois extension of $A^\alpha$. Then there is a one-to-one correspondence between the wide subgroupoids $\cH$ of $\G$ and the separable, $\alpha$-strong $A^\alpha$-subalgebras $C$ of $A$ such that $\G_C$ is a subgroupoid of $\G$, given by $\cH \mapsto A^{\alpha|_{\cH}}$ with inverse $C \mapsto \G_C$.
    \end{theorem}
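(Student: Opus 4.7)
The plan is to derive this correspondence directly by combining the two preceding results, Theorem \ref{teogalpargrp1} and Theorem \ref{teogalpargrp2}, which between them already contain all of the substantive content. Set $\Phi(\cH) = A^{\alpha|_\cH}$ and $\Psi(C) = \G_C$; what remains is just to check that both maps land in the appropriate target classes and that $\Psi \circ \Phi = \mathrm{id}$ and $\Phi \circ \Psi = \mathrm{id}$.

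First I would verify that $\Phi$ is well-defined. Given a wide subgroupoid $\cH$ of $\G$, Theorem \ref{teogalpargrp1} applies verbatim: $A^{\alpha|_\cH}$ is an $A^\alpha$-separable, $\alpha$-strong subalgebra of $A$, and $\G_{A^{\alpha|_\cH}} = \cH$, which is in particular a subgroupoid of $\G$. So $\Phi(\cH)$ lies in the collection of subalgebras considered on the right-hand side of the correspondence.

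Next I would check that $\Psi$ is well-defined. If $C$ is a separable, $\alpha$-strong $A^\alpha$-subalgebra of $A$ whose set $\G_C$ happens to be a subgroupoid of $\G$, then since $\G_0 \subseteq \G_C$ holds automatically for any subalgebra $C$, this subgroupoid is necessarily wide. Thus $\Psi(C)$ lies in the collection of wide subgroupoids. The two composition identities are then immediate: $\Psi \circ \Phi(\cH) = \G_{A^{\alpha|_\cH}} = \cH$ is the last assertion of Theorem \ref{teogalpargrp1}, and $\Phi \circ \Psi(C) = A^{\alpha|_{\G_C}} = C$ is exactly Theorem \ref{teogalpargrp2}.

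There is no real obstacle left at this stage; the two genuinely delicate steps were already absorbed into the proofs of the auxiliary theorems. The harder of the two was the $C \subseteq A^{\alpha|_{\G_C}} \Rightarrow A^{\alpha|_{\G_C}} \subseteq C$ direction in Theorem \ref{teogalpargrp2}, where one had to lift $C$ to its image $C'$ in the globalization $B$ via the map $\psi_{\G_C}$, form the auxiliary subalgebra $\overline{C} = C' \oplus B^{\beta|_{\G_C}}(1_B - 1_{B'})$, verify via Lemma \ref{lemacoord} that $\overline{C}$ is $B^\beta$-separable and $\beta$-strong with $\G_{\overline{C}} = \G_C$, and finally apply the global orthogonal Galois correspondence of \cite[Theorem 4.5]{paques2018galois} to conclude $\overline{C} = B^{\beta|_{\G_C}}$, then transport the equality back to $A$ through the isomorphism $x \mapsto x 1_A$. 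With those steps already discharged, Theorem \ref{teofund} itself only requires the bookkeeping above.
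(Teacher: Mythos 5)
Your proposal is correct and follows exactly the paper's route: the paper's own proof of Theorem \ref{teofund} is the one-line observation that it follows from Theorems \ref{teogalpargrp1} and \ref{teogalpargrp2}, and your write-up simply makes the bookkeeping (well-definedness of both maps and the two composition identities) explicit. Nothing is missing.
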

    \begin{proof}
        It follows by Theorems \ref{teogalpargrp1} and \ref{teogalpargrp2}.
    \end{proof}

\section{General Actions}

Until now, Galois theory for groupoid actions has required the orthogonality of the action and the reason is to assure the invariance of the trace map. This hypothesis, even though it covers a large class of actions, is somewhat restrictive. So the goal of this section is to develop a Galois theory for nonorthogonal partial actions of groupoids. 

\subsection{Orthogonalization}
\begin{defi}
Two partial actions $\alpha = (A_g,\alpha_g)_{g \in \G}$ of $\G$ on a ring $A$ and $\varepsilon = (E_g,\varepsilon_g)_{g \in \G}$ of $\G$ on a ring $E$ are \emph{equivalent} (we denote by $\alpha \simeq \varepsilon$) if there is a collection of ring isomorphisms $\{\varphi_e : A_e \to E_e : e \in \G_0\}$ such that
\begin{enumerate}
    \item[(i)] $\varphi_{r(g)}(A_g) = E_g$, for all $g \in \G$, 

    \item[(ii)] $\varphi_{r(g)} \circ \alpha_g(a) = \varepsilon_g \circ \varphi_{d(g)}(a)$, for all $a \in A_{g^{-1}}$.
\end{enumerate}
\end{defi}

Let $\alpha = (A_g, \alpha_g)_{g \in \G}$ be a preunital partial action (not necessarily orthogonal) of a finite groupoid $\G$ on a ring $A$. We wish to construct a partial action $\varepsilon$ of $\G$ on a ring $E$ which is orthogonal and equivalent to $\alpha$. Indeed, consider $E = \prod_{e \in \G_0} A_e$ and let $\varphi_e : A_e \to E$ be the natural inclusion given by $\varphi(a) = (a\delta_{e,f})_{f \in \G_0}$. Define $E_g = \varphi_{r(g)}(A_g)$ and $\varepsilon_g = \varphi_{r(g)} \circ \alpha_g \circ \varphi_{d(g)}^{-1}|_{E_{g^{-1}}}$, for all $g \in \G$. Then it is easy to see that $\varepsilon = (E_g,\varepsilon_g)_{g \in \G}$ is an orthogonal partial action of $\G$ on $E$ and $\varepsilon \simeq \alpha$. 

\begin{defi}
    If an orthogonal partial action $\varepsilon$ is equivalent to a partial action $\alpha$, we say that $\varepsilon$ is the \emph{orthogonalization} of $\alpha$.
\end{defi} 

\begin{obs} Observe that all  the orthogonalizations are equivalent, that is, orthogonalization is unique up to equivalence, and that is why we use the term ``the" orthogonalization instead of ``an" orthogonalization.\end{obs}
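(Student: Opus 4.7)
The plan is to prove that the relation $\simeq$ on partial groupoid actions is an equivalence relation; then if $\varepsilon^{(1)}$ and $\varepsilon^{(2)}$ are both orthogonalizations of $\alpha$, the chain $\varepsilon^{(1)} \simeq \alpha \simeq \varepsilon^{(2)}$ immediately gives $\varepsilon^{(1)} \simeq \varepsilon^{(2)}$, which is precisely the claim. Reflexivity is trivial (take $\varphi_e = \text{Id}_{A_e}$), so only symmetry and transitivity need checking.

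First I would verify symmetry. If $\alpha \simeq \varepsilon$ via ring isomorphisms $\{\varphi_e : A_e \to E_e\}_{e \in \G_0}$, then the inverses $\{\varphi_e^{-1}\}$ implement the opposite equivalence: condition (i) $\varphi_{r(g)}(A_g) = E_g$ reads $\varphi_{r(g)}^{-1}(E_g) = A_g$, and applying $\varphi_{r(g)}^{-1}$ on the left and $\varphi_{d(g)}^{-1}$ on the right of condition (ii) yields $\alpha_g \circ \varphi_{d(g)}^{-1} = \varphi_{r(g)}^{-1} \circ \varepsilon_g$ on $E_{g^{-1}}$.

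Next I would prove transitivity. Suppose $\alpha \simeq \varepsilon^{(1)}$ via $\{\varphi^{(1)}_e\}$ and $\alpha \simeq \varepsilon^{(2)}$ via $\{\varphi^{(2)}_e\}$. Define
\[
\psi_e := \varphi^{(2)}_e \circ (\varphi^{(1)}_e)^{-1} : E^{(1)}_e \longrightarrow E^{(2)}_e, \qquad e \in \G_0,
\]
which is a ring isomorphism as a composition of such. Condition (i) is immediate:
\[
\psi_{r(g)}(E^{(1)}_g) \;=\; \varphi^{(2)}_{r(g)}\bigl((\varphi^{(1)}_{r(g)})^{-1}(E^{(1)}_g)\bigr) \;=\; \varphi^{(2)}_{r(g)}(A_g) \;=\; E^{(2)}_g.
\]
For condition (ii), given $x \in E^{(1)}_{g^{-1}}$, set $a := (\varphi^{(1)}_{d(g)})^{-1}(x) \in A_{g^{-1}}$; then applying (ii) for $\varepsilon^{(1)}$ (in its inverse form from the symmetry step) followed by (ii) for $\varepsilon^{(2)}$ gives
\[
\psi_{r(g)} \circ \varepsilon^{(1)}_g(x) \;=\; \varphi^{(2)}_{r(g)} \circ \alpha_g(a) \;=\; \varepsilon^{(2)}_g \circ \varphi^{(2)}_{d(g)}(a) \;=\; \varepsilon^{(2)}_g \circ \psi_{d(g)}(x),
\]
so $\varepsilon^{(1)} \simeq \varepsilon^{(2)}$ via $\{\psi_e\}_{e \in \G_0}$.

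I do not anticipate a genuine obstacle: the argument is a diagram chase. The only care needed is bookkeeping of the ideals $A_g, A_{g^{-1}}, E^{(i)}_g, E^{(i)}_{g^{-1}}$ and verifying that the restrictions of the $\varphi^{(i)}_e$ and of $\psi_e$ to the relevant summands land exactly where required, which is ensured by condition (i) at each stage.
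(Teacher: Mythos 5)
Your proposal is correct and is exactly the argument the paper has in mind: the remark is stated without proof, and the intended justification is precisely that $\simeq$ is symmetric and transitive, so any two orthogonal actions equivalent to $\alpha$ are equivalent to each other. Your verification of symmetry and transitivity (including the bookkeeping that $\varphi_{d(g)}(A_{g^{-1}}) = E_{g^{-1}}$ via condition (i) applied to $g^{-1}$, which is what makes the diagram chase legal) is sound.
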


As well as in the previous section, given a unital partial action $\alpha$, we fix the notation $\beta = (B_g,\beta_g)_{g \in \G}$ to represent its globalization, which is an orthogonal global action on a ring $B = \prod_{e \in \G_0} B_e$ such that $A_e$ is an ideal of $B_e$, for all $e \in \G_0$. We also fix the notation $\varepsilon = (E_g,\varepsilon_g)_{g \in \G}$ to denote the orthogonalization of $\alpha$.

\begin{prop}
    Let $\alpha$ be a unital partial action and $\beta$ its globalization. Then $\beta$ is also the globalization of $\varepsilon$.
\end{prop}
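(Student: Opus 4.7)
The strategy is to exhibit, for each $e \in \G_0$, a ring monomorphism $\psi_e : E_e \to B_e$ making the four axioms (G1)--(G4) of globalization hold for $\varepsilon$ with respect to $\beta$. Since, by construction of the orthogonalization, $E_e = \varphi_e(A_e)$ where $\varphi_e : A_e \to E$ is the inclusion into the $e$-component of the product $E = \prod_{f \in \G_0} A_f$, the natural choice is $\psi_e = \iota_e \circ \varphi_e^{-1}|_{E_e}$, where $\iota_e : A_e \hookrightarrow B_e$ is the inclusion coming from $\beta$ being the globalization of $\alpha$ (using the paper's convention $A_e \subseteq B_e$). This is a composition of two ring isomorphisms onto their images, hence a monomorphism.

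Before checking (G1)--(G4), I would first note that the globalization of $\varepsilon$ actually exists, by invoking Theorem \ref{teoglob}: each $E_e$ is generated by the central idempotent $\varphi_e(1_e) \in E$ and each $E_g = \varphi_{r(g)}(A_g)$ is generated by $\varphi_{r(g)}(1_g)$, so $\varepsilon$ is unital and therefore admits a (unique up to isomorphism) globalization. It then suffices to verify that $(\beta, B, \{\psi_e\})$ satisfies the defining conditions.

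The verifications themselves are essentially bookkeeping. For (G1), $\psi_e(E_e) = A_e$, which is an ideal of $B_e$ by (G1) applied to $\alpha$. For (G2), using (G2) for $\alpha$,
\begin{align*}
\psi_{r(g)}(E_g) &= A_g = A_{r(g)} \cap \beta_g(A_{d(g)}) = \psi_{r(g)}(E_{r(g)}) \cap \beta_g(\psi_{d(g)}(E_{d(g)})).
\end{align*}
For (G3), any $x \in E_{g^{-1}}$ has the form $x = \varphi_{d(g)}(a)$ with $a \in A_{g^{-1}}$; then $\varepsilon_g(x) = \varphi_{r(g)}(\alpha_g(a))$ by the definition of $\varepsilon_g$, so
\begin{align*}
\psi_{r(g)} \circ \varepsilon_g(x) = \alpha_g(a) = \beta_g(a) = \beta_g \circ \psi_{d(g)}(x),
\end{align*}
where the middle equality uses (G3) for $\alpha$. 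For (G4), this is immediate from (G4) for $\alpha$ after replacing each $A_{d(h)}$ with $\psi_{d(h)}(E_{d(h)})$, which by construction equals $A_{d(h)}$.

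The only mildly delicate point I anticipate is keeping the identifications clean: the symbol $A_e$ sits simultaneously inside $E$ (via $\varphi_e$) and inside $B_e$ (via $\iota_e$), and one must be careful that the two are being connected precisely by the map $\psi_e$ chosen above, so that (G3) really does reduce to (G3) for $\alpha$ rather than to a tautology in the wrong algebra. Once this identification is spelled out, no nontrivial computation is required, and uniqueness of the globalization (Theorem \ref{teoglob}) guarantees that the globalization of $\varepsilon$ is exactly $\beta$.
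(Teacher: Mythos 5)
Your proof is correct and follows essentially the same route as the paper: both realize $E=\prod_{e\in\G_0}A_e$ inside $B=\prod_{e\in\G_0}B_e$ via the componentwise inclusions $A_e\subseteq B_e$ and reduce each globalization axiom for $\varepsilon$ to the corresponding axiom for $\alpha$ (the paper phrases this more tersely by saying $\varepsilon$ is the standard restriction of $\beta$ to the ideal $E$, while you spell out (G1)--(G4) explicitly). Your closing appeal to uniqueness of the globalization is harmless but unnecessary, since verifying (G1)--(G4) already exhibits $\beta$ as a globalization of $\varepsilon$.
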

\begin{proof}
    It only takes to observe that we can realize $E = \prod_{e \in \G_0} A_e$ as an ideal of $B$ and that $\varepsilon$ coincides with the standard restriction of $\beta$ to $E$ as defined in \cite[Section 1]{bagio2012partial}, that is, $E_e = B_e \cap E$, for all $e \in \G_0$, $E_g = E_{r(g)} \cap \beta_g(E_{d(g)})$, for all $g \in \G$, and $\varepsilon_g = \beta_g|_{E_{g^{-1}}}$.
\end{proof}

Partial Galois extensions also have some properties preserved by orthogonalizations.

\begin{prop} \label{propalphaglaoisgammagalois}
    On the above notations, assume that $A|_{A^{\alpha}}$ is a partial $\alpha$-Galois extension. Then $E|_{E^{\varepsilon}}$ is a partial $\varepsilon$-Galois extension.
\end{prop}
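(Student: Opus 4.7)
The plan is to transport the $\alpha$-Galois coordinates of $A$ directly to partial $\varepsilon$-Galois coordinates in $E = \prod_{e \in \G_0} A_e$ via the component-wise inclusions $\varphi_e \colon A_e \to E$. Let $\{x_i, y_i\}_{i=1}^n \subseteq A$ be a set of partial $\alpha$-Galois coordinates. Since each $1_e$ is a central idempotent of $A$ and $A_e = A\cdot 1_e$ is an ideal of $A$, the products $x_i 1_e$ and $y_i 1_e$ lie in $A_e$, and I would define
\[
X_i = \sum_{e \in \G_0} \varphi_e(x_i 1_e), \qquad Y_i = \sum_{e \in \G_0} \varphi_e(y_i 1_e),
\]
that is, the element of $E$ whose $e$-th component is $x_i 1_e$, respectively $y_i 1_e$. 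The aim is then to verify directly that $\{X_i, Y_i\}_{i=1}^n$ satisfies the partial $\varepsilon$-Galois coordinate identity.

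For the verification, fix $g \in \G$ and recall that the identity of $E_{g^{-1}}$ is $1_{g^{-1}}^E = \varphi_{d(g)}(1_{g^{-1}})$. Because products $\varphi_e(a)\varphi_f(b)$ vanish whenever $e \neq f$, the product $Y_i 1_{g^{-1}}^E$ collapses to its $d(g)$-component, which equals $\varphi_{d(g)}(y_i 1_{g^{-1}})$ after absorbing the unit $1_{d(g)}$ into $1_{g^{-1}} \in A_{g^{-1}} \subseteq A_{d(g)}$. Applying $\varepsilon_g = \varphi_{r(g)} \circ \alpha_g \circ \varphi_{d(g)}^{-1}$ produces $\varphi_{r(g)}(\alpha_g(y_i 1_{g^{-1}}))$, and multiplying by $X_i$ likewise collapses to the $r(g)$-component; since $\alpha_g(y_i 1_{g^{-1}}) \in A_g \subseteq A_{r(g)}$ already absorbs $1_{r(g)}$, the result is $\varphi_{r(g)}(x_i \alpha_g(y_i 1_{g^{-1}}))$. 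Summing over $i$ and applying the $\alpha$-Galois identity yields
\[
\sum_{i=1}^n X_i \varepsilon_g(Y_i 1_{g^{-1}}^E) = \varphi_{r(g)}\!\Bigl(\sum_{e \in \G_0} \delta_{e,g} 1_e\Bigr),
\]
and a case split on $g \in \G_0$ versus $g \notin \G_0$ shows this equals $\sum_{e \in \G_0} \delta_{e,g} 1_e^E$, which is the required coordinate identity.

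The main obstacle is purely notational rather than conceptual: keeping track of the various unit idempotents $1_g, 1_{g^{-1}}, 1_{r(g)}, 1_{d(g)}$ and systematically exploiting the orthogonality of $E$ so that multiplication by $\varphi_e(-)$ selects the $e$-th component. It is worth noting that this direct argument bypasses the globalization $\beta$: since Lemma~\ref{lemagaloisparcialsseglobal} requires the partial action to be orthogonal, one cannot simply pass from $A|_{A^{\alpha}}$ being $\alpha$-Galois to $B|_{B^{\beta}}$ being $\beta$-Galois and then descend to $E$. The componentwise construction of $X_i$ and $Y_i$ from $x_i$ and $y_i$ avoids this gap entirely.
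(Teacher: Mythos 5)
Your proposal is correct and follows essentially the same route as the paper's own proof: the paper likewise defines $x_i' = \sum_{e \in \G_0}\varphi_e(x_i1_e)$ and $y_i' = \sum_{e \in \G_0}\varphi_e(y_i1_e)$, uses the orthogonality of $E$ to collapse the product to the $r(g)$- and $d(g)$-components, and concludes with the same case split on $g \in \G_0$ versus $g \notin \G_0$. Nothing further is needed.
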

\begin{proof}
    Let $\{x_i,y_i\}_{i=1}^n$ be an $\alpha$-Galois partial coordinate system of $A$ over $A^{\alpha}$, that is,
    \begin{align*}
        \sum_{i=1}^n x_i\alpha_g(y_i1_{g^{-1}}) = \sum_{e \in \G_0} 1_e\delta_{e,g},
    \end{align*}
    for all $g \in \G$. Write $x_i' = \sum_{e \in \G_0} \varphi_e(x_i1_e) \in E$ and $y_i' = \sum_{e \in \G_0} \varphi_e(y_i1_e) \in E$, for all $1 \leq i \leq n$. Then \begin{align*}
        \sum_{i=1}^n x_i'\varepsilon_g(y_i'\varphi_{d(g)}(1_{g^{-1}})) & = \sum_{i=1}^n \sum_{e \in \G_0} \sum_{f \in \G_0} \varphi_e(x_i1_e)\varepsilon_g(\varphi_f(y_i1_f)\varphi_{d(g)}(1_{g^{-1}})) \\
        & = \sum_{i=1}^n \varphi_{r(g)}(x_i1_{r(g)})\varepsilon_g(\varphi_{d(g)}(y_i1_{d(g)})\varphi_{d(g)}(1_{g^{-1}})) \\
        & = \sum_{i=1}^n \varphi_{r(g)}(x_i1_{r(g)})\varphi_{r(g)} \circ \alpha_g \circ \varphi_{d(g)}^{-1}(\varphi_{d(g)}(y_i1_{g^{-1}})) \\
        & = \sum_{i=1}^n \varphi_{r(g)}(x_i \alpha_g (y_i1_{g^{-1}})1_{r(g)}) = \varphi_{r(g)} \left ( \sum_{i=1}^n x_i\alpha_g (y_i1_{g^{-1}})1_{r(g)} \right ).
    \end{align*}

    There are two cases to consider: (1) $g \in \G_0$; or (2) $g \notin \G_0$. Suppose (1). Then $\varphi_{r(g)}\left ( \sum_{i=1}^n x_i\alpha_g (y_i1_{g^{-1}})1_{r(g)} \right ) = \varphi_{r(g)}(1_{r(g)}) = 1_{C_{r(g)}}$. Now suppose (2). Thus $$\varphi_{r(g)}\left ( \sum_{i=1}^n x_i\alpha_g (y_i1_{g^{-1}})1_{r(g)} \right ) = \varphi_{r(g)}(0) = 0.$$ 
    
    Hence, $\{x_i',y_i'\}_{i=1}^n$ is a $\varepsilon$-Galois partial coordinate system of $E$ over $E^\varepsilon$.
\end{proof}

The converse of the above proposition is not true even for the global case, as we can see in the next example.

\begin{exe} \label{exenaovolta}
Consider $\G = \{g,g^{-1},r(g),d(g)\}$ and $A = Re_1 \oplus Re_2 \oplus Re_3$, where $R \neq 0$ is a commutative ring and the $e_i$'s are pairwise orthogonal, central idempotents such that $e_1 + e_2 + e_3 = 1_A$.

Define $\beta = (A_s,\beta_s)_{s \in \G}$ as \begin{align*}
    A_{g^{-1}} = A_{d(g)} = Re_1 \oplus Re_2, & \quad A_g = A_{r(g)} = Re_2 \oplus Re_3, \\ \beta_g(ae_1 + be_2)  = be_2 + ae_3, & \quad
    \beta_{e} = \text{Id}_{A_e},  \text{ for } e \in \G_0, \,\, \text{ and }
    \beta_{g^{-1}}  = \beta_g^{-1}.
\end{align*}

Hence $ A^{\alpha} = R(e_1 + e_3) \oplus Re_2.$

\noindent \textbf{Claim.} $A$ is not a $\beta$-Galois extension of $A^{\beta}$. 

Indeed, suppose that $A$ is a $\beta$-Galois extension of $A^{\beta}$, that is, there are $x_i = x_{i1}e_1 + x_{i_2}e_2 + x_{i_3}e_3$ and $y_i = y_{i1}e_1 + y_{i_2}e_2 + y_{i_3}e_3$ in $A$ and a positive integer $n$ such that $\sum_{i=1}^n x_i\beta_s(y_i1_{s^{-1}}) = \sum_{e \in \G_0} \delta_{e,s}1_e, \text{ for all } s \in \G.$

In particular, for $s = g$, $\sum_{i=1}^n x_i\beta_g(y_i1_{g^{-1}}) = 0.$ Therefore\begin{align*}
    & \sum_{i=1}^n (x_{i1}e_1 + x_{i2}e_2 + x_{i3}e_3)\beta_g(y_{i1}e_1 + y_{i2}e_2) = 0 \\
    \implies & \sum_{i=1}^n (x_{i1}e_1 + x_{i2}e_2 + x_{i3}e_3)(y_{i1}e_3 + y_{i2}e_2) = 0 \\
    \implies & \sum_{i=1}^n x_{i2}y_{i2}e_2 + x_{i3}y_{i1}e_3 = 0 \\
    \implies & \left ( \sum_{i=1}^n  x_{i2}y_{i2} \right )e_2 + \left ( \sum_{i=1}^n x_{i3}y_{i1} \right ) e_3 = 0.
\end{align*}
Then $\sum_{i=1}^n x_{i2}y_{i2} = 0 = \sum_{i=1}^n x_{i3}y_{i1}$. On the other hand, since $\sum_{i=1}^n x_i\beta_{r(g)}(y_i1_{g^{-1}}) = e_2 + e_3,$ it follows that
\begin{align*}
    & \sum_{i=1}^n (x_{i1}e_1 + x_{i2}e_2 + x_{i3}e_3)\beta_{r(g)}(y_{i2}e_2 + y_{i3}e_3) = e_2 + e_3 \\
    \implies & \sum_{i=1}^n (x_{i1}e_1 + x_{i2}e_2 + x_{i3}e_3)(y_{i2}e_2 + y_{i3}e_3) = e_2 + e_3 \\
    \implies & \sum_{i=1}^n x_{i2}y_{i2}e_2 + x_{i3}y_{i3}e_3 = e_2 + e_3\\
    \implies & \left ( \sum_{i=1}^n  x_{i2}y_{i2} \right )e_2 + \left ( \sum_{i=1}^n  x_{i3}y_{i3} \right )e_3 = e_2 + e_3.
\end{align*}
Hence $\sum_{i=1}^n x_{i2}y_{i2} = 1_R = \sum_{i=1}^n x_{i3}y_{i1}$. But then $1_R = 0$, a contradiction. 

Now consider the orthogonalization $\varepsilon$ of $\alpha$ given as follows: $E = \prod_{e \in \G_0} A_e = A_{d(g)} \times A_{r(g)} = (Re_1 \oplus Re_2) \times (Re_2 \oplus Re_3)$, $E_{g^{-1}} = E_{d(g)} = (Re_1 \oplus Re_2,0)$, $E_g = E_{r(g)} = (0, Re_2 \oplus Re_3)$, $\varepsilon_{r(g)} = \text{Id}_{E_g}$, $\varepsilon_{d(g)} = \text{Id}_{E_{g^{-1}}}$, $\varepsilon_g((ae_1 + be_2, 0)) = (0, ae_2 + be_3)$ and $\varepsilon_{g^{-1}} = \varepsilon_g^{-1}$.  We can easily see that $E$ is $\varepsilon$-Galois over $E^{\varepsilon} = R(e_1,e_2) \oplus R(e_2,e_3)$ with coordinate system $\{x_1 = y_1 = (e_1,0), x_2 = y_2 = (e_2,0), x_3 = y_3 = (0,e_2), x_4 = y_4 = (0,e_3)\}$.
\end{exe}

As a corollary of Proposition \ref{propalphaglaoisgammagalois} we have the following.

\begin{corollary} \label{corgaloisparcialglobalnaoortog}
    Assume that $\alpha$ is a unital partial action (not necessarily orthogonal). If $A$ is $\alpha$-Galois partial over $A^{\alpha}$, then $B$ is $\beta$-Galois over $B^{\beta}$.
\end{corollary}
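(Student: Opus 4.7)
The plan is to chain together the results already established in this section: pass from $\alpha$ to its orthogonalization $\varepsilon$, and then from $\varepsilon$ to the globalization $\beta$ using the orthogonal case.

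First I would invoke Proposition \ref{propalphaglaoisgammagalois}: since $A|_{A^\alpha}$ is partial $\alpha$-Galois by hypothesis, it follows that $E|_{E^\varepsilon}$ is partial $\varepsilon$-Galois, where $\varepsilon$ is the orthogonalization of $\alpha$ constructed on $E = \prod_{e \in \G_0} A_e$. Next, by the preceding proposition in this section, $\beta$ is also the globalization of $\varepsilon$. Since $\varepsilon$ is orthogonal by construction, we are now exactly in the setting of Lemma \ref{lemagaloisparcialsseglobal}, whose implication (i) $\Rightarrow$ (ii) yields that $B$ is $\beta$-Galois over $B^\beta$.

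The entire argument is therefore a three-step chain: \textbf{(1)} $\alpha$-Galois $\Rightarrow$ $\varepsilon$-Galois (Proposition \ref{propalphaglaoisgammagalois}); \textbf{(2)} $\beta$ globalizes $\varepsilon$ (previous proposition); \textbf{(3)} $\varepsilon$-Galois with orthogonal $\varepsilon$ $\Rightarrow$ $\beta$-Galois (Lemma \ref{lemagaloisparcialsseglobal}). I do not expect any genuine obstacle here, since all three ingredients have been proven in full generality in the immediately preceding material; the only subtlety is ensuring that the orthogonality hypothesis of Lemma \ref{lemagaloisparcialsseglobal} is fulfilled, and this is automatic because orthogonalization produces an orthogonal partial action by definition. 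The corollary therefore reduces to a clean citation of the three preceding results.
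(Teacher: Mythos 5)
Your proposal is correct and follows exactly the paper's own argument: Proposition \ref{propalphaglaoisgammagalois} to pass from $\alpha$ to $\varepsilon$, then Lemma \ref{lemagaloisparcialsseglobal} to pass from $\varepsilon$ to $\beta$, with the intermediate observation that $\beta$ globalizes $\varepsilon$ justifying the application of that lemma. No differences worth noting.
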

\begin{proof}
    By Proposition \ref{propalphaglaoisgammagalois}, $A|_{A^{\alpha}}$ partial $\alpha$-Galois implies $E|_{E^{\varepsilon}}$ partial $\varepsilon$-Galois. By Lemma \ref{lemagaloisparcialsseglobal}, $E|_{E^{\varepsilon}}$ partial $\varepsilon$-Galois implies $B|_{B^{\beta}}$ $\beta$-Galois, which concludes the proof.
\end{proof}   

\subsection{Strongly $\alpha$-Galois Extension}

Keeping the notations of the previous subsection, define \begin{align*}
    \varphi : A & \to E \\
    a & \mapsto  (\varphi_e(a1_e))_{e \in \G_0}.
\end{align*}

We have that $\varphi$ is a monomorphism. Indeed, $\varphi(a) = 0$ if and only if $\varphi_e(a1_e) = 0$, for all $e \in \G_0$. But $\varphi_e(a1_e) = 0$ if and only if $a1_e = 0$, since $\varphi_e : A_e \to E_e$ is a monomorphism, and from (P1) it follows that $a = 0$ if and only if $a1_e = 0$, for all $e \in \G_0$.

\begin{prop} \label{proptrad}
    Let $\cH$ be a wide subgroupoid of $\G$. The following assertion holds: $\varphi(A^{\alpha|_{\cH}}) = E^{\varepsilon|_{\cH}} \cap \varphi(A)$.
\end{prop}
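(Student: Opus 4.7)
The plan is to prove the equality by showing both inclusions, with each reducing to a direct compatibility calculation between $\alpha$, $\varepsilon$, and $\varphi$ that exploits the definition of the orthogonalization.

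For the inclusion $\varphi(A^{\alpha|_{\cH}}) \subseteq E^{\varepsilon|_{\cH}} \cap \varphi(A)$, I would start by picking $a \in A^{\alpha|_{\cH}}$ and checking that $\varphi(a)$ is fixed by $\varepsilon|_{\cH}$. The key observation is that $1_{E_{h^{-1}}} = \varphi_{d(h)}(1_{h^{-1}})$ sits in the $d(h)$-component and $1_{E_h} = \varphi_{r(h)}(1_h)$ sits in the $r(h)$-component. Thus $\varphi(a) \cdot 1_{E_{h^{-1}}}$ collapses to $\varphi_{d(h)}(a 1_{h^{-1}})$ (placed in coordinate $d(h)$), and applying $\varepsilon_h = \varphi_{r(h)} \circ \alpha_h \circ \varphi_{d(h)}^{-1}$ yields $\varphi_{r(h)}(\alpha_h(a 1_{h^{-1}}))$. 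Since $a \in A^{\alpha|_{\cH}}$ gives $\alpha_h(a 1_{h^{-1}}) = a 1_h$, this equals $\varphi_{r(h)}(a 1_h) = \varphi(a) \cdot 1_{E_h}$, as required.

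For the reverse inclusion $E^{\varepsilon|_{\cH}} \cap \varphi(A) \subseteq \varphi(A^{\alpha|_{\cH}})$, I would take an element $\varphi(a) \in E^{\varepsilon|_{\cH}}$ and run the previous computation backwards. The equality $\varepsilon_h(\varphi(a) \cdot 1_{E_{h^{-1}}}) = \varphi(a) \cdot 1_{E_h}$ becomes $\varphi_{r(h)}(\alpha_h(a 1_{h^{-1}})) = \varphi_{r(h)}(a 1_h)$, and since $\varphi_{r(h)}$ is a ring monomorphism (injectivity of $\varphi$ was already established in the paragraph preceding the proposition), we can cancel it to obtain $\alpha_h(a 1_{h^{-1}}) = a 1_h$ for every $h \in \cH$, i.e. $a \in A^{\alpha|_{\cH}}$.

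There is no real obstacle here; the statement is essentially a bookkeeping lemma that formalizes the slogan ``orthogonalization preserves invariants component-by-component''. The only point requiring mild care is keeping track of which component of the product ring $E = \prod_{e \in \G_0} A_e$ each expression $1_{E_g}$, $\varphi_e(a 1_e)$, or $\varepsilon_h(\cdot)$ lives in, together with the vanishing of cross terms in $\varphi(a) \cdot 1_{E_{h^{-1}}}$ at coordinates $e \neq d(h)$. Once those identifications are made, the two inclusions are mirror images of one another and the proof is complete.
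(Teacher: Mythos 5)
Your proposal is correct and follows essentially the same route as the paper: both inclusions are proved by the same computation, collapsing $\varphi(a)\cdot\varphi_{d(h)}(1_{h^{-1}})$ to $\varphi_{d(h)}(a1_{h^{-1}})$, applying $\varepsilon_h = \varphi_{r(h)}\circ\alpha_h\circ\varphi_{d(h)}^{-1}$, and invoking injectivity of $\varphi_{r(h)}$ for the reverse direction. The only cosmetic difference is that the paper cancels the monomorphism $\varphi_{r(h)}$ directly rather than appealing to the injectivity of the global map $\varphi$, but this is the same argument.
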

\begin{proof}
    Take $a \in A^{\alpha|_{\cH}}$. Given $h \in \cH$, 
    \begin{align*}
        \varepsilon_h(\varphi(a)\varphi_{d(h)}(1_{h^{-1}})) & = \varepsilon_h(\varphi_{d(h)}(a1_{d(h)})\varphi_{d(h)}(1_{h^{-1}})) = \varepsilon_h(\varphi_{d(h)}(a1_{d(h)}1_{h^{-1}})) \\
        & = \varepsilon_h(\varphi_{d(h)}(a1_{h^{-1}})) = \varphi_{r(h)} \circ \alpha_h \circ \varphi_{d(h)}^{-1}(\varphi_{d(h)}(a1_{h^{-1}})) \\
        & = \varphi_{r(h)} \circ \alpha_h(a1_{h^{-1}}) = \varphi_{r(h)}(a1_{h}) \\
        & = \varphi_{r(h)}(a)\varphi_{r(h)}(1_{h}) = \varphi(a)\varphi_{r(h)}(1_{h}).
    \end{align*}
    
    Since we chose $h$ arbitrarily, we have that $\varphi(a) \in E^{\varepsilon|_{\cH}}$, proving the first inclusion.

    For the reverse, consider $a \in A$ such that $\varphi(a) \in E^{\varepsilon|_{\cH}}$. Then, given $h \in \cH$, we have that $\varepsilon_h(\varphi(a)\varphi_{d(h)}(1_{h^{-1}})) = \varphi(a)\varphi_{r(h)}(1_h)$. However
    \begin{align*}
        \varepsilon_h(\varphi(a)\varphi_{d(h)}(1_{h^{-1}})) & = \varepsilon_h(\varphi_{d(h)}(a1_{d(h)})\varphi_{d(h)}(1_{h^{-1}})) = \varepsilon_h(\varphi_{d(h)}(a1_{h^{-1}})) = \varphi_{r(h)} \circ \alpha_h(a1_{h^{-1}}),
    \end{align*}
    and $\varphi(a)\varphi_{r(h)}(1_h) = \varphi_{r(h)}(a1_{r(h)})\varphi_{r(h)}(1_h) = \varphi_r(h)(a1_h)$.

    Therefore, $\varphi_{r(h)}(\alpha_h(a1_{h^{-1}})) = \varphi_{r(h)}(a1_h)$. Since $\varphi_{r(h)}$ is a monomorphism, it follows that $\alpha_h(a1_{h^{-1}}) = a1_h$, for all $h \in \cH$, that is, $a \in A^{\alpha|_{\cH}}$.
\end{proof}

\begin{defi} \label{defstrongGalois} Let $\alpha$ be a partial action of the groupoid $\G$ on a ring $A$. Consider the orthogonalization $\varepsilon$ of $\alpha$ on a ring $E$. We say that $\alpha$ is a \emph{strongly Galois partial action} if:
    \begin{enumerate}
        \item[(i)] $A_g \neq 0$, for all $g \in \G_0$;

        \item[(ii)] $A$ is a partial $\alpha$-Galois extension of $A^{\alpha}$;

        \item[(iii)] For all wide subgroupoids $\cH_1 \neq \cH_2$ of $\G$, it holds that $$E^{\varepsilon|_{\cH_1}} \cap \varphi(A) \neq E^{\varepsilon|_{\cH_2}} \cap \varphi(A).$$
    \end{enumerate} In this case, we also say that $A$ is a \emph{strongly $\alpha$-Galois extension over} $A^{\alpha}$.
\end{defi}

Now we are able to state a Galois correspondence for the nonorthogonal partial case. Firstly, we set:
    \begin{align*}
       \mathsf{W} & = \{ \cH \subseteq \G : \cH \text{ is a wide subgroupoid of } \G \}, \\
        \mathsf{A} & = \{ C \subseteq A : C = \varphi^{-1}(E^{\varepsilon|_{\cH}} \cap \varphi(A)), \text{ for some } \cH \in  \mathsf{W}\}.
    \end{align*}

\begin{prop} Assume $A|_{A^{\alpha}}$ a partial $\alpha$-Galois extension. If $C \in \mathsf{A}$, then $A|_{C}$ is a partial $\alpha|_{\cH}$-Galois extension for some wide subgroupoid $\cH$ of $\G$.\end{prop}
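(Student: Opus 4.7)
The plan is to unwind the definition of $C\in\mathsf{A}$ and then simply re-use the partial Galois coordinates of the ambient extension. By hypothesis there is a wide subgroupoid $\cH$ of $\G$ such that $C = \varphi^{-1}(E^{\varepsilon|_{\cH}}\cap\varphi(A))$. The first task is to identify $C$ concretely: Proposition~\ref{proptrad} gives $\varphi(A^{\alpha|_\cH}) = E^{\varepsilon|_\cH}\cap\varphi(A)$, and since $\varphi$ was shown to be injective just before Proposition~\ref{proptrad}, applying $\varphi^{-1}$ yields $C = A^{\alpha|_\cH}$. So the candidate subgroupoid for which $A|_C$ is partial Galois is precisely this $\cH$.

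Next, observe that the restriction $\alpha|_\cH = (A_h,\alpha_h)_{h\in\cH}$ is a partial action of $\cH$ on $A$ (this was already used in Theorem~\ref{teogalpargrp1}). It remains to exhibit a set of partial $\alpha|_\cH$-Galois coordinates of $A$ over $C$. Take any partial $\alpha$-Galois coordinate system $\{x_i,y_i\}_{i=1}^n\subseteq A$ for the extension $A|_{A^\alpha}$, which exists by assumption, so that
\begin{align*}
\sum_{i=1}^n x_i\,\alpha_g(y_i 1_{g^{-1}}) = \sum_{e\in\G_0}\delta_{e,g}\,1_e,\qquad \text{for all } g\in\G.
\end{align*}
Since $\cH$ is \emph{wide}, we have $\cH_0 = \G_0$; restricting the displayed identity to $g = h\in\cH\subseteq\G$ immediately gives
\begin{align*}
\sum_{i=1}^n x_i\,\alpha_h(y_i 1_{h^{-1}}) = \sum_{e\in\cH_0}\delta_{e,h}\,1_e,\qquad \text{for all } h\in\cH,
\end{align*}
which is exactly the defining identity for $\{x_i,y_i\}_{i=1}^n$ to be a partial $\alpha|_\cH$-Galois coordinate system of $A$ over $C = A^{\alpha|_\cH}$.

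There is no real obstacle here; the content of the statement is purely that membership in $\mathsf{A}$ selects an intermediate ring of the form $A^{\alpha|_\cH}$, and once this identification is made via Proposition~\ref{proptrad} the Galois coordinates are inherited verbatim from $A|_{A^\alpha}$ by the wideness of $\cH$.
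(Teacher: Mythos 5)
Your proposal is correct and follows essentially the same route as the paper's own proof: identify $C = A^{\alpha|_{\cH}}$ via Proposition~\ref{proptrad} and injectivity of $\varphi$, then observe that the partial $\alpha$-Galois coordinates of $A$ over $A^{\alpha}$ restrict verbatim to $\cH$ (using that $\cH$ is wide, so $\cH_0 = \G_0$). No discrepancies to report.
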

\begin{proof}
    If $C \in \mathsf{A}$, then $C = \varphi^{-1}(E^{\varepsilon|_{\cH}} \cap \varphi(A))$, for some $\cH \in \mathsf{W}$. By Proposition \ref{proptrad}, $C = \varphi^{-1}(\varphi(A^{\alpha|_{\cH}})) = A^{\alpha|_{\cH}}$. Now, consider a partial $\alpha$-Galois coordinate system $\{x_i,y_i\}_{i=1}^n$ for $A$ over $A^{\alpha}$, that is, 
    \begin{align*}
        \sum_{i=1}^n x_i\alpha_g(y_i1_{g^{-1}}) = \sum_{e \in \G_0} \delta_{e,g}1_e,
    \end{align*}
    for all $g \in \G$. Then it is clear that
    \begin{align*}
        \sum_{i=1}^n x_i\alpha_h(y_i1_{h^{-1}}) = \sum_{e \in \G_0} \delta_{e,h}1_e,
    \end{align*}
    for all $h \in \cH$, that is, $A$ is a partial $\alpha|_{\cH}$-Galois extension of $A^{\alpha|_{\cH}} = C$.\end{proof}

Now we can prove the main result of this subsection. From now on, assume that the ring $A$ is commutative.

\begin{theorem}[Galois Correspondence for Strongly Galois Partial Groupoid Actions] \label{teogalnaoort}
    Let $\G$ be a finite groupoid acting partially on a commutative ring $A$ via a unital strongly Galois partial action $\alpha$. Then there is a one-to-one correspondence between the sets $ \mathsf{W}$ and $ \mathsf{A}$ given by $ \mathsf{W} \ni \cH \mapsto A^{\alpha|_{\cH}}$ with inverse $ \mathsf{A} \ni C \mapsto \G_{\varphi(C)}$.
\end{theorem}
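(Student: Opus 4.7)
The strategy is to transfer the correspondence for $(A,\alpha)$ into the orthogonal setting $(E,\varepsilon)$ via the embedding $\varphi$ and Proposition \ref{proptrad}, so that the orthogonal Galois correspondence (Theorem \ref{teofund}, applicable since $E$ is partial $\varepsilon$-Galois over $E^\varepsilon$ by Proposition \ref{propalphaglaoisgammagalois}) becomes available at the level of the orthogonalization.

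First I verify that $\Phi:\mathsf{W}\to\mathsf{A}$, $\cH\mapsto A^{\alpha|_{\cH}}$, is a bijection. Proposition \ref{proptrad} combined with injectivity of $\varphi$ yields $A^{\alpha|_{\cH}}=\varphi^{-1}(E^{\varepsilon|_{\cH}}\cap\varphi(A))$, placing $\Phi(\cH)$ in $\mathsf{A}$; surjectivity is then immediate from the definition of $\mathsf{A}$, since any $C\in\mathsf{A}$ arises as $\varphi^{-1}(E^{\varepsilon|_{\cH}}\cap\varphi(A))=A^{\alpha|_{\cH}}$ for some $\cH\in\mathsf{W}$. Injectivity rests on condition (iii) of Definition \ref{defstrongGalois}: if $A^{\alpha|_{\cH_1}}=A^{\alpha|_{\cH_2}}$, applying $\varphi$ and Proposition \ref{proptrad} gives $E^{\varepsilon|_{\cH_1}}\cap\varphi(A)=E^{\varepsilon|_{\cH_2}}\cap\varphi(A)$, whence $\cH_1=\cH_2$.

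Next I identify the inverse as $C\mapsto\G_{\varphi(C)}$ by establishing $\G_{\varphi(C)}=\cH$ whenever $C=A^{\alpha|_{\cH}}$. The inclusion $\cH\subseteq\G_{\varphi(C)}$ is immediate from $\varphi(C)\subseteq E^{\varepsilon|_{\cH}}$. For the reverse, the key observation is the following: any wide subgroupoid $\cH'$ of $\G$ with $\cH\subseteq\cH'\subseteq\G_{\varphi(C)}$ satisfies $E^{\varepsilon|_{\cH'}}\cap\varphi(A)=E^{\varepsilon|_{\cH}}\cap\varphi(A)$, since $\cH\subseteq\cH'$ gives $E^{\varepsilon|_{\cH'}}\subseteq E^{\varepsilon|_{\cH}}$ and hence the inclusion $\subseteq$, while $\cH'\subseteq\G_{\varphi(C)}$ forces each $h'\in\cH'$ to fix $\varphi(C)$ under $\varepsilon$, yielding $\varphi(C)\subseteq E^{\varepsilon|_{\cH'}}\cap\varphi(A)$ and hence the inclusion $\supseteq$. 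Condition (iii) then forces $\cH'=\cH$, so no proper wide subgroupoid can sit strictly between $\cH$ and $\G_{\varphi(C)}$.

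The main obstacle is that $\G_{\varphi(C)}$ is a priori only a subset of $\G$ containing $\G_0$ and closed under inverses; to take $\cH'=\G_{\varphi(C)}$ in the key observation and conclude $\G_{\varphi(C)}=\cH$, one must verify that $\G_{\varphi(C)}$ is in fact a wide subgroupoid. Closure under composition is the delicate point: in the non-orthogonal unital case, the containment $A_{(g_1 g_2)^{-1}}\subseteq A_{g_2^{-1}}$ typically fails, so axiom (P3) is not directly applicable to $c\,1_{(g_1 g_2)^{-1}}$ for $c\in C$ and $g_1,g_2\in\G_{\varphi(C)}$ with $(g_1,g_2)\in\G_2$. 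I would handle this by a careful idempotent manipulation: multiply by $1_{g_2^{-1}}$ to land in the intersection $A_{(g_1 g_2)^{-1}}\cap A_{g_2^{-1}}$ where (P3) does apply, use the $\alpha_{g_i}$-invariance of $c$ together with the identity $\alpha_g(1_{g^{-1}}1_h)=1_g 1_{gh}$ from Proposition \ref{propriedadesacpargrp}(ii) to evaluate the composition, and reassemble using the commutativity of $A$ and the relation between $\alpha$ and its globalization $\beta$ (for which isotropy is automatically subgroupoid-closed) to extract $\alpha_{g_1 g_2}(c\,1_{(g_1 g_2)^{-1}})=c\,1_{g_1 g_2}$. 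Once this closure is in hand, applying the key observation with $\cH'=\G_{\varphi(C)}$ completes the proof.
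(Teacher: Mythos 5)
Your first paragraph is correct and is, in substance, the paper's own argument: the paper obtains the bijection $\mathsf{W}\leftrightarrow\mathsf{A}$ by applying Theorem \ref{teofund} to the orthogonalization $\varepsilon$ and then cutting down via Proposition \ref{proptrad} together with condition (iii) of Definition \ref{defstrongGalois}, which is exactly your well-definedness/surjectivity/injectivity check. The gap is in your treatment of the inverse formula $C\mapsto\G_{\varphi(C)}$. Your ``key observation'' is fine, but, as you yourself note, it only bites once $\G_{\varphi(C)}$ is known to be a wide subgroupoid, and your sketched closure-under-composition argument cannot be completed. For $x\in\varphi(C)$ and composable $g_1,g_2\in\G_{\varphi(C)}$, write $x1_{(g_1g_2)^{-1}}=x1_{(g_1g_2)^{-1}}1_{g_2^{-1}}+x1_{(g_1g_2)^{-1}}(1_{d(g_2)}-1_{g_2^{-1}})$. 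Your idempotent manipulation handles only the first summand (where (P3) and Proposition \ref{propriedadesacpargrp}(ii) give $\varepsilon_{g_1g_2}(x1_{(g_1g_2)^{-1}}1_{g_2^{-1}})=x1_{g_1}1_{g_1g_2}$, consistent with the target on that piece); on the second summand (P3) does not relate $\varepsilon_{g_1g_2}$ to $\varepsilon_{g_1}\circ\varepsilon_{g_2}$, and the separate invariance of $x$ under $g_1$ and $g_2$ gives no information about $\varepsilon_{g_1g_2}$ there. This is precisely the obstruction the paper records in its closing Remark (explaining why Proposition \ref{propglobalmaximal} does not extend to partial actions), and the paper separately warns that $\G_C$ need not be a subgroupoid in general. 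The appeal to the globalization does not rescue the step, because the relevant witnesses produced by $\beta$-strongness live in $B^{\beta|_{\cH}}$, not in $\varphi(C)=E^{\varepsilon|_{\cH}}\cap\varphi(A)$.

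To be fair, you have isolated a genuine subtlety that the paper's proof glosses over: composing its two correspondences yields the inverse $C\mapsto\G_{T}$ with $T=E^{\varepsilon|_{\cH}}$ (a wide subgroupoid, by Theorem \ref{teogalpargrp1} applied to $\varepsilon$), and since $\varphi(C)=T\cap\varphi(A)$ is in general a proper subalgebra of $T$, one only gets $\cH=\G_{T}\subseteq\G_{\varphi(C)}$; identifying the two requires exactly the inclusion you could not establish. So the existence of the bijection $\cH\mapsto A^{\alpha|_{\cH}}$ is proved by both routes, but neither your argument nor the paper's justifies the explicit formula $C\mapsto\G_{\varphi(C)}$ for its inverse; the formula that both arguments actually support is $C\mapsto$ the unique $\cH\in\mathsf{W}$ with $\varphi(C)=E^{\varepsilon|_{\cH}}\cap\varphi(A)$, equivalently $C\mapsto\G_{E^{\varepsilon|_{\cH}}}$.
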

\begin{proof}
    First apply Theorem \ref{teofund} on the orthogonalization $\varepsilon$ of $\alpha$, so that we obtain a Galois correspondence between the wide subgroupoids of $\G$ and the $E^\varepsilon$-separable, $\varepsilon$-strong subalgebras $T$ of $E$ whose $\G_T$ is a wide subgroupoid. By Proposition \ref{proptrad} and the strongly Galois hypothesis, there is a one-to-one correspondence between those subalgebras $T$ and the $A^\alpha$-subalgebras $C$ that appear in the set $\mathsf{A}$, via $T \leftrightarrow T \cap \varphi(A) = C$. Composing these two correspondences we obtain the one stated in the theorem, finishing the proof.
\end{proof}

In Definition \ref{defstrongGalois}, clearly if $\G$ is a group or if $\alpha$ is already orthogonal, the condition (iii) holds. But these are not the only classes that satisfy (i)-(iii). In the following we can see an example.

\begin{exe}
    Let $G= \{1_G,g,h,gh\}$ be a subgroup of $\mathbb{S}_8$, where
    \begin{align*}
        g = \binom{1 \; 2 \; 3 \; 4 \; 5 \; 6 \; 7 \; 8}{2 \; 1 \; 4 \; 3 \; 6 \; 5 \; 8 \; 7}, \quad h = \binom{1 \; 2 \; 3 \; 4 \; 5 \; 6 \; 7 \; 8}{3 \; 4 \; 1 \; 2 \; 7 \; 8 \; 5 \; 6}, \quad gh = \binom{1 \; 2 \; 3 \; 4 \; 5 \; 6 \; 7 \; 8}{4 \; 3 \; 2 \; 1 \; 8 \; 7 \; 6 \; 5}.
    \end{align*}

    Consider $\G = \mathcal{A}_2 \times G$ with $\G_0 = \{f_1,f_2\}$, that is, $G = \{(f_1,f_1,1_G),(f_1,f_1,g),(f_1,f_1,h), $ $(f_1,f_1,gh),(f_1,f_2,1_G),(f_1,f_2,g),(f_1,f_2,h),(f_1,f_2,gh),(f_2,f_1,1_G),(f_2,f_1,g),$ $(f_2,f_1,h), \qquad$ $ (f_2,f_1,gh),$ $(f_2,f_2,1_G),$ $(f_2,f_2,g),(f_2,f_2,h),(f_2,f_2,gh)\}$. Given $k = (i,j,\ell) \in \G$ with $i,j \in \{f_1,f_2\}$, $\ell \in G$, we define $\partial_k = \ell$.

    Consider a commutative ring $R$ and the $R$-algebra $A = \bigoplus_{i=1}^{12} Re_i$, where $\{e_i\}_{i=1}^{12}$ is a set of pairwise orthogonal idempotents whose sum is $1_A$. Defining $A_{k_1} = \bigoplus_{i=1}^8 Re_i$, $A_{k_2} = \bigoplus_{i=5}^{12} Re_i$, for all $k_1, k_2 \in \G$ with $r(k_1) = f_1$, $r(k_2) = f_2$, and
    \begin{align*}
        \beta_{k}\left (\sum_{i=1}^8 a_ie_i \right) & = \begin{cases}
            \sum_{i=1}^8 a_ie_{\partial_k(i)}, \text{ if } d(k) = r(k) = f_1, \\
            \sum_{i=1}^8 a_ie_{\partial_k(i)+4}, \text{ if } d(k) = f_1,  r(k) = f_2,
        \end{cases} \\
        \beta_{k}\left (\sum_{i=1}^8 a_ie_{i+4} \right) & = \begin{cases}
            \sum_{i=1}^8 a_ie_{\partial_k(i)+4}, \text{ if } d(k) = r(k) = f_2, \\
            \sum_{i=1}^8 a_ie_{\partial_k(i)}, \text{ if } d(k) = f_2,  r(k) = f_1,
        \end{cases}
    \end{align*}
    we have that $\beta = (A_k,\beta_k)_{k \in \G}$ is a nonorthogonal action of $\G$ on $A$.
    
    Moreover, $A$ is strongly $\beta$-Galois over $A^\beta = R$ with coordinate system $\{x_i = y_i = e_i\}_{i=1}^{12}$. We will present the Galois correspondence for this action. For that, we will denote $\G(f_1) = G_1$, $\G(f_2) = G_2$, and we will use the following bracket notation to describe the subalgebras: we will represent by $[i_1, i_2, \ldots, i_n]$ the $R$-algebra $R(e_{i_1} + e_{i_2} \cdots e_{i_n}) \oplus \bigoplus_{j \notin I} Re_j$, where $I = \{i_1, \ldots, i_n\} \subseteq \{1, \ldots, 12\}$, and we will omit the direct sum symbol. For example,
    \begin{align*}
        [1,2] & \leftrightarrow R(e_1 + e_2) \oplus \bigoplus_{i=3}^{12} Re_i, \\
        [3,4][5,6,7] & \leftrightarrow Re_1 \oplus Re_2 \oplus R(e_3 + e_4) \oplus R(e_5 + e_6 + e_7) \oplus \bigoplus_{i=8}^{12} Re_i, \\
        [1,2,3,4,5,6,7,8,9,10,11,12] & \leftrightarrow A^\beta.
    \end{align*}

    The Galois correspondence relates the 33 elements of $ \mathsf{W}$ with the 33 elements of $ \mathsf{A}$:
    
    \begin{align*}
        \G_0 & \leftrightarrow A \\
        \G_0 \cup \{(f_1,f_1,g)\} & \leftrightarrow [1,2][3,4][5,6][7,8] \\
        \G_0 \cup \{(f_1,f_1,h)\} & \leftrightarrow [1,3][2,4][5,7][6,8] \\
        \G_0 \cup \{(f_1,f_1,gh)\} & \leftrightarrow [1,4][2,3][5,8][6,7]\\
        \G_0 \cup G_1 & \leftrightarrow [1,2,3,4][5,6,7,8]\\
        \G_0 \cup \{(f_2,f_2,g)\} & \leftrightarrow [5,6][7,8][9,10][11,12] \\
        \G_0 \cup \{(f_2,f_2,h)\} & \leftrightarrow [5,7][6,8][9,11][10,12] \\
        \G_0 \cup \{(f_2,f_2,gh)\} & \leftrightarrow [5,8][6,7][9,12][10,11] \\
        \G_0 \cup G_2 & \leftrightarrow [5,6,7,8][9,10,11,12] \\ 
        H_1 := \G_0 \cup \{(f_1,f_1,g),(f_2,f_2,g)\} & \leftrightarrow [1,2][3,4][5,6][7,8][9,10][11,12] \\ 
        \G_0 \cup \{(f_1,f_1,g),(f_2,f_2,h)\} & \leftrightarrow [1,2][3,4][5,6,7,8][9,11][10,12] \\ 
        \G_0 \cup \{(f_1,f_1,g),(f_2,f_2,gh)\} & \leftrightarrow [1,2][3,4][5,6,7,8][9,12][10,11] \\
         \G_0 \cup \{(f_1,f_1,g)\} \cup G_2 & \leftrightarrow [1,2][3,4][5,6,7,8][9,10,11,12] \\ 
        \G_0 \cup \{(f_1,f_1,h),(f_2,f_2,g)\} & \leftrightarrow [1,3][2,4][5,6,7,8][9,10][11,12] \\ 
        H_2 := \G_0 \cup \{(f_1,f_1,h),(f_2,f_2,h)\} & \leftrightarrow [1,3][2,4][5,7][6,8][9,11][10,12]\\
        \G_0 \cup \{(f_1,f_1,h),(f_2,f_2,gh)\} & \leftrightarrow [1,3][2,4][5,6,7,8][9,12][10,11] \\ 
        \G_0 \cup \{(f_1,f_1,h)\} \cup G_2 & \leftrightarrow [1,3][2,4][5,6,7,8][9,10,11,12] \\ 
        \G_0 \cup \{(f_1,f_1,gh),(f_2,f_2,g)\} & \leftrightarrow [1,4][2,3][5,6,7,8][9,10][11,12] \\ 
        \G_0 \cup \{(f_1,f_1,gh),(f_2,f_2,h)\} & \leftrightarrow [1,4][2,3][5,6,7,8][9,11][10,12] \\ 
        H_3 := \G_0 \cup \{(f_1,f_1,gh),(f_2,f_2,gh)\} & \leftrightarrow [1,4][2,3][5,8][6,7][9,12][10,11] \\
        \G_0 \cup \{(f_1,f_1,gh)\} \cup G_2 & \leftrightarrow [1,4][2,3][5,6,7,8][9,10,11,12]
       \end{align*}

        \begin{align*}
        \G_0 \cup G_1 \cup \{(f_2,f_2,g)\} & \leftrightarrow [1,2,3,4][5,6,7,8][9,10][11,12] \\
        \G_0 \cup G_1 \cup \{(f_2,f_2,h)\} & \leftrightarrow [1,2,3,4][5,6,7,8][9,11][10,12] \\
        \G_0 \cup G_1 \cup \{(f_2,f_2,gh)\} & \leftrightarrow [1,2,3,4][5,6,7,8][9,12][10,11] \\
        \G_0 \cup G_1 \cup G_2 & \leftrightarrow [1,2,3,4][5,6,7,8][9,10,11,12] \\
        K_0 := \G_0 \cup \{(f_1,f_2,e),(f_2,f_1,e)\} & \leftrightarrow [1,5,9][2,6,10][3,7,11][4,8,12] \\
        K_1 := \G_0 \cup \{(f_1,f_2,g),(f_2,f_1,g)\} & \leftrightarrow [1,6,9][2,5,10][3,8,11][4,7,12] \\
        K_2 := \G_0 \cup \{(f_1,f_2,h),(f_2,f_1,h)\} & \leftrightarrow [1,7,9][2,8,10][3,5,11][4,6,12] \\
        K_3 := \G_0 \cup \{(f_2,f_1,gh),(f_2,f_1,gh)\} & \leftrightarrow [1,8,9][2,7,10][3,6,11][4,5,12] \\
        K_0 \cup K_1 \cup H_1 & \leftrightarrow [1,2,5,6,9,10][3,4,7,8,11,12] \\
        K_0 \cup K_2 \cup H_2 & \leftrightarrow [1,3,5,7,9,11][2,4,6,8,10,12] \\
        K_0 \cup K_3 \cup H_3 & \leftrightarrow [1,4,5,8,9,12][2,3,6,7,10,11] \\
        \G & \leftrightarrow A^\beta
    \end{align*}
\end{exe}

\begin{obs}Until now, we can conclude the following:
\begin{enumerate}
    \item[(1)] There are properties in Galois theory that are valid for orthogonal actions, but not for nonorthogonal ones, so the theory cannot be reduced to the orthogonal case.
    \item[(2)] Theorem \ref{teogalnaoort} gives a generalization of the Galois correspondences that already exist in the literature, but it is still restrictive, once it requires that the partial action is strongly Galois, and not all partial actions over Galois extensions belongs to this class, as we can see in the next example.
\end{enumerate}
 \end{obs}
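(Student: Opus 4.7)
The remark combines two independent assertions, so I will treat each in turn.

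For claim (1), the goal is to produce a Galois-theoretic property that holds for the orthogonalization $\varepsilon$ but fails for the original partial action $\alpha$, thereby showing that the study of $\alpha$ cannot be replaced by the study of $\varepsilon$. The paper has already set this up: Proposition \ref{propalphaglaoisgammagalois} gives the implication ``$\alpha$-Galois $\Rightarrow$ $\varepsilon$-Galois'', and Example \ref{exenaovolta} exhibits a global groupoid action $\beta$ whose orthogonalization is $\varepsilon$-Galois while a direct computation (carried out in that example by comparing the required Galois coordinates for $s=g$ and $s=r(g)$) shows that $\beta$ is not $\beta$-Galois. So the converse of Proposition \ref{propalphaglaoisgammagalois} already fails at the level of global actions. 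The plan for (1) is therefore simply to invoke Example \ref{exenaovolta} as the witness: if a Galois theory for nonorthogonal actions could be reduced to its orthogonal counterpart, the notion ``$\alpha$-Galois'' would be transferable to and from ``$\varepsilon$-Galois'', and this example shows it is not.

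For claim (2), I need to exhibit a unital partial action $\alpha$ that is $\alpha$-Galois but not strongly Galois in the sense of Definition \ref{defstrongGalois}. Proposition \ref{proptrad} together with the injectivity of $\varphi$ (established just before that proposition) tells me that condition (iii) of Definition \ref{defstrongGalois} is equivalent to the assignment $\cH \mapsto A^{\alpha|_\cH}$ being injective on wide subgroupoids of $\G$. So the task reduces to constructing a partial $\alpha$-Galois extension $A|_{A^\alpha}$ in which two distinct wide subgroupoids $\cH_1 \neq \cH_2$ of $\G$ nevertheless satisfy $A^{\alpha|_{\cH_1}} = A^{\alpha|_{\cH_2}}$.

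The natural mechanism is to allow the central idempotents $1_g$ of the partial action to vanish on enough of $A$ that adjoining certain morphisms to a subgroupoid does not shrink the fixed subring. Concretely, the plan is to take $\G$ with a sufficiently rich isotropy group, act partially on a small direct sum of copies of a commutative ring $R$, and arrange the domains $A_g$ so that for some $g \notin \cH_1$ the condition $\alpha_g(a 1_{g^{-1}}) = a 1_g$ is automatic for every element of $A^{\alpha|_{\cH_1}}$ (for instance because $1_g$ is swallowed by the idempotents defining $A^{\alpha|_{\cH_1}}$). Then $\cH_2 := \langle \cH_1, g \rangle$ is a strictly larger wide subgroupoid of $\G$ with the same fixed subring, violating (iii). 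Verifying that such an $\alpha$ still satisfies (i) and (ii) of Definition \ref{defstrongGalois} is a direct check of unit-element coordinates analogous to the computation in Example \ref{exenaovolta} but arranged so as to succeed rather than fail.

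The main obstacle will be to balance the two requirements simultaneously: the action must be rich enough to carry a genuine partial Galois coordinate system (so that $A|_{A^\alpha}$ is Galois), yet degenerate enough that two wide subgroupoids collapse onto the same invariant subalgebra. This is precisely the purpose served by the example that follows the remark in the paper, and the verification is, as in Example \ref{exenaovolta}, a finite check on a presentation of $A$ by orthogonal idempotents.
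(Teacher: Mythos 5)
Your handling of claim (1) matches the paper exactly: the witness is Example \ref{exenaovolta}, where the nonorthogonal global action fails to be Galois while its orthogonalization is Galois, so the converse of Proposition \ref{propalphaglaoisgammagalois} fails and no reduction to the orthogonal case is possible. For claim (2) your reduction is also the right one --- by Proposition \ref{proptrad} and the injectivity of $\varphi$, condition (iii) of Definition \ref{defstrongGalois} is precisely the injectivity of $\cH \mapsto A^{\alpha|_{\cH}}$ on wide subgroupoids, so one must exhibit a partial $\alpha$-Galois extension with two distinct wide subgroupoids having the same invariants. Where you diverge from the paper is in the mechanism you propose for producing such a collapse: you suggest degenerating the domain idempotents $1_g$ so that the invariance condition becomes automatic on $A^{\alpha|_{\cH_1}}$, and you correctly flag the resulting tension with keeping the extension Galois --- but you never resolve that tension or produce the witness. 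The paper's example avoids the difficulty entirely: it takes $\G$ to be the \emph{disconnected} groupoid $\mathbb{Z}_2 \,\dot\cup\, \mathbb{Z}_2$ acting \emph{globally} on $A = Re_1 \oplus Re_2$ with every $A_k = A$ (so no idempotent degenerates at all), and arranges for the two nontrivial morphisms $g$ and $h$, living over different objects, to act by the \emph{same} swap $e_1 \leftrightarrow e_2$; then $A^{\beta|_{\cH_1}} = A^{\beta|_{\cH_2}} = A^{\beta} = R$ for the two distinct wide subgroupoids $\cH_1 = \G_0 \cup \{g\}$ and $\cH_2 = \G_0 \cup \{h\}$, while the Galois coordinates $x_i = y_i = e_i$ survive untouched. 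Since the remark's entire content for (2) is the existence of this counterexample, deferring its construction to ``the example that follows'' leaves your argument incomplete as a blind proof; the disconnected-groupoid trick is the missing idea that makes the balance you worry about trivial to achieve.
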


\begin{exe}
Let $\G = \{g,f_1\} \cup \{h,f_2\}$ be the groupoid given by the disjoint union of two groups isomorphic to $\mathbb{Z}_2$, that is, $g^2 = f_1 = f_1^2$ and $h^2 = f_2 = f_2^2$. Consider a commutative ring $R$ and $A = Re_1 \oplus Re_2$, with $e_1, e_2$ orthogonal idempotents whose sum is $1_A$. Define $A_{k} = A$, for all $k \in \G$,
\begin{align*}
    \beta_g(ae_1 + be_2) = be_1 + ae_2 = \beta_h(ae_1 + be_2),
\end{align*}
and $\beta_{f_1} = \beta_{f_2} = \text{id}_A$. We have that $\beta$ is a (nonorthogonal) global action of $\G$ on $A$. Moreover, $A$ is $\beta$-Galois over $A^{\beta} = R$ with coordinate system $x_i = y_i = e_i$, $i = 1,2$. However, we have that $A^{\beta|_{\G(f_1)}} = A^{\beta|_{\G(f_2)}} = A^{\beta}$, from where it follows that $A$ is not strongly $\beta$-Galois over $A^{\beta}$.
\end{exe}

\subsection{Weakening the hypothesis}

To finish the paper, we extend, in the global case, the Galois correspondence for Galois extensions that are not strongly Galois. The main argument is the following proposition.

\begin{prop} \label{propglobalmaximal}
    Let $\beta = (A_g,\beta_g)$ be a global action of a groupoid $\G$ on a commutative ring $A$. If $\cH_1$ and $\cH_2$ are wide groupoids of $\G$ such that $A^{\beta|_{\cH_1}} = A^{\beta|_{\cH_2}}$, then the wide subgroupoid of $\G$ generated by $\cH_1$ and $\cH_2$, named $\cH$, is such that $A^{\beta|_{\cH}} = A^{\beta|_{\cH_1}}$.
\end{prop}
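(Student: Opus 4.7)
The containment $A^{\beta|_{\cH}} \subseteq A^{\beta|_{\cH_1}}$ is immediate from $\cH_1 \subseteq \cH$, so the task is to establish the reverse inclusion. The plan is to argue by induction on word length in the generating set $\cH_1 \cup \cH_2$, exploiting the fact that in the global setting the local unit $1_g$ equals $1_{r(g)}$, which is what allows the identities defining invariance to concatenate cleanly along compositions.

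First I would record a structural observation: since each $\cH_i$ is a wide subgroupoid, $\cH_1 \cup \cH_2$ is closed under inversion and contains $\G_0$. Consequently, every element of the generated wide subgroupoid $\cH$ can be expressed as a composable product $g = g_n g_{n-1} \cdots g_1$ with $g_i \in \cH_1 \cup \cH_2$. This reduces the problem to the following claim: if $a \in A^{\beta|_{\cH_1}} = A^{\beta|_{\cH_2}}$, then $\beta_g(a1_{d(g)}) = a1_{r(g)}$ for every such composable word $g$.

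The base case $n=1$ is the hypothesis on $a$. For the inductive step, set $h = g_{n-1} \cdots g_1$, so $g = g_n h$ with $(g_n,h) \in \G_2$, and $r(h) = d(g_n)$. Since $\beta$ is global, $A_h = A_{r(h)}$ and $1_{r(h)} = 1_{d(g_n)}$. Thus
\begin{align*}
    \beta_g(a1_{d(g)}) = \beta_{g_n}\bigl(\beta_h(a1_{d(h)})\bigr) = \beta_{g_n}(a1_{r(h)}) = \beta_{g_n}(a1_{d(g_n)}) = a1_{r(g_n)} = a1_{r(g)},
\end{align*}
where the first equality uses (P3), the second uses the induction hypothesis, the fourth uses that $g_n \in \cH_1 \cup \cH_2$ fixes $a$, and the final equality is $r(g_n) = r(g)$.

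The main thing to be careful about is that the argument genuinely uses globality: in the partial case one would need $\beta_h(a1_{d(h)}) = a1_h$ with $1_h$ possibly a proper sub-idempotent of $1_{r(h)}$, and then applying $\beta_{g_n}$ would only yield information about $a$ after multiplication by $\beta_{g_n}(1_h1_{d(g_n)}) = 1_{g_n} \cdot 1_{g_nh}$, breaking the induction. This explains why the statement is formulated for global $\beta$ and is the reason no additional hypothesis such as strongly Galois is needed here.
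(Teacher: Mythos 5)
Your proposal is correct and follows essentially the same route as the paper: both reduce to writing an element of $\cH$ as a composable word in $\cH_1 \cup \cH_2$ and then peel off one generator at a time, using globality to identify $1_{h} = 1_{r(h)}$ so the invariance identities concatenate. The paper presents this as a single telescoping computation rather than a formal induction, but the content is identical, and your closing remark about why the argument breaks in the partial case matches the paper's own subsequent Remark.
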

\begin{proof}
    It is true that $A^{\beta|_\cH} \subseteq A^{\beta|_{\cH_1}}$, since $\cH_1 \subseteq \cH$. For the reverse inclusion, consider $a \in A^{\beta|_{\cH_1}} = A^{\beta|_{\cH_2}}$. Take $h \in \cH$. We have that $h = h_1 \cdots h_n$, for some positive integer $n$ and $h_1, \ldots, h_n \in \cH_1 \cup \cH_2$ with $d(h_i) = r(h_{i+1})$, for all $1 \leq i \leq n-1$. Then
    \begin{align*}
        \beta_h(a1_{h^{-1}}) & = \beta_{h_1 \cdots h_n}(a1_{d(h_n)})  = \beta_{h_1 \cdots h_{n-1}}(\beta_{h_n}(a1_{d(h_n)})) \\
        & = \beta_{h_1 \cdots h_{n-1}}(a1_{r(h_n)}) 
         = \beta_{h_1 \cdots h_{n-1}}(a1_{d(h_{n-1})}) \\
        & = \beta_{h_1 \cdots h_{n-2}}(\beta_{h_{n-1}}(a1_{d(h_{n-1})})) 
        = \cdots \\
        & = \beta_{h_1}(a1_{d(h_1)}) = a1_{r(h_1)} = a1_{h}.
    \end{align*}
\end{proof}

Keeping the notations of the previous subsection, we define an equivalence relation $\sim$ in $ \mathsf{W}$ by: $\cH_1 \sim \cH_2$ if and only if $A^{\alpha|_{\cH_1}} = A^{\alpha|_{\cH_2}}$. If $\alpha$ is global, then by Proposition \ref{propglobalmaximal} each $\sim$-class has a unique maximal element with respect to inclusion. Namely, given $\cH \in  \mathsf{W}$, we write $\cH^{max}$ to represent the maximal element of the $\sim$-class of $\cH$. Therefore we can define the set
\begin{align*}
     \mathsf{W}^{max} = \{ \cH^{max} : \cH \in  \mathsf{W} \}.
\end{align*}

Now we state the correspondence that finish our paper. We keep the same notation $\mathsf{W}$ and $\mathsf{W}^{max}$  for a global action $\beta$ on a ring $A$. When the extension $A|_{A^\beta}$ is strongly Galois in the theorem below, we have exactly Theorem \ref{teogalnaoort} for global actions, since in this case every $\sim$-class consists of only one wide subgroupoid. If $\beta$ is orthogonal, we have the Galois correspondence for orthogonal groupoid actions on commutative rings \cite[Theorem 4.6]{paques2018galois}, since $\beta$ being orthogonal implies that the extension $A|_{A^\beta}$ is strongly Galois and that $\beta$ is equal to its own orthogonalization.

\begin{theorem}[Galois Correspondence for Global Groupoid Actions]\label{teofinal}
    Let $\G$ be a finite groupoid acting on a commutative ring $A$ via a unital global action $\beta$. Consider the orthogonalization $\varepsilon$ of $\beta$ on a ring $E$. Assume that $A_g \neq 0$, for all $g \in \G_0$, and that $A$ is a $\beta$-Galois extension of $A^{\beta}$. Then there is a one-to-one correspondence between the sets $ \mathsf{W}^{max}$ and $ \mathsf{A}$ given by $ \mathsf{W}^{max} \ni \cH \mapsto A^{\beta|_{\cH}}$ with inverse $ \mathsf{A} \ni C \mapsto (\G_{\varphi(C)})^{max}$.
\end{theorem}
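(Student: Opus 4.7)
The plan is to deduce the correspondence from the orthogonal case already handled in Theorem \ref{teofund}, applied to the orthogonalization $\varepsilon$ of $\beta$, and then to quotient by the equivalence $\sim$. Because $\beta$ is global, the orthogonalization $\varepsilon$ is also global (and orthogonal and unital by construction), and $E_g = \varphi_{r(g)}(A_g) \neq 0$ for every $g \in \G$ since $A_g = A_{r(g)} \neq 0$. Proposition \ref{propalphaglaoisgammagalois} then yields that $E$ is a $\varepsilon$-Galois extension of $E^{\varepsilon}$, so Theorem \ref{teofund} applies to $\varepsilon$ and produces a bijection between $\mathsf{W}$ and the separable, $\varepsilon$-strong $E^{\varepsilon}$-subalgebras $T$ of $E$ with $\G_T \in \mathsf{W}$, realized by $\cH \mapsto E^{\varepsilon|_{\cH}}$ with inverse $T \mapsto \G_T$.

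Combining this with Proposition \ref{proptrad} and the injectivity of $\varphi$, for each $\cH \in \mathsf{W}$ one has $A^{\beta|_\cH} = \varphi^{-1}(E^{\varepsilon|_\cH} \cap \varphi(A)) \in \mathsf{A}$, and every element of $\mathsf{A}$ has this form. Thus the map $\mathsf{W} \to \mathsf{A}$ sending $\cH$ to $A^{\beta|_\cH}$ is surjective, and two subgroupoids share the same image precisely when they are $\sim$-equivalent. By Proposition \ref{propglobalmaximal} each $\sim$-class has a unique maximal element, so restricting to $\mathsf{W}^{max}$ gives a bijection $\mathsf{W}^{max} \to \mathsf{A}$.

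The substantive step is identifying the inverse with $C \mapsto (\G_{\varphi(C)})^{max}$, and the main obstacle is checking that $\G_{\varphi(C)}$ is itself a wide subgroupoid of $\G$ — it is precisely here that globality is indispensable (and the whole argument collapses for genuinely partial $\beta$). Wideness is immediate. For closure under inversion, applying $\varepsilon_{g^{-1}} = \varepsilon_g^{-1}$ to $\varepsilon_g(x \varphi_{d(g)}(1_{g^{-1}})) = x \varphi_{r(g)}(1_g)$ gives $\varepsilon_{g^{-1}}(x \varphi_{r(g)}(1_g)) = x \varphi_{d(g)}(1_{g^{-1}})$, which because $\beta$ is global is the invariance condition at $g^{-1}$. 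For closure under composition with $(g,h) \in \G_2$ and $g,h \in \G_{\varphi(C)}$, globality makes $\varepsilon_{gh} = \varepsilon_g \circ \varepsilon_h$ valid on all of $E_{(gh)^{-1}}$, and
\begin{align*}
\varepsilon_{gh}(x \varphi_{d(h)}(1_{(gh)^{-1}})) = \varepsilon_g(\varepsilon_h(x \varphi_{d(h)}(1_{h^{-1}}))) = \varepsilon_g(x \varphi_{r(h)}(1_h)) = \varepsilon_g(x \varphi_{d(g)}(1_{g^{-1}})) = x \varphi_{r(g)}(1_g),
\end{align*}
using $1_h = 1_{r(h)} = 1_{d(g)} = 1_{g^{-1}}$ in the middle step, so $gh \in \G_{\varphi(C)}$.

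Once $\G_{\varphi(C)} \in \mathsf{W}$ is established, write $C = A^{\beta|_\cH}$ for some $\cH \in \mathsf{W}$. The inclusion $\cH \subseteq \G_{\varphi(C)}$ is clear and yields $A^{\beta|_{\G_{\varphi(C)}}} \subseteq C$. For the reverse, given $a \in C$ and $g \in \G_{\varphi(C)}$, unwinding $\varepsilon_g(\varphi(a) \varphi_{d(g)}(1_{g^{-1}})) = \varphi(a) \varphi_{r(g)}(1_g)$ via the definition of $\varepsilon$ produces $\varphi_{r(g)}(\beta_g(a 1_{g^{-1}})) = \varphi_{r(g)}(a 1_g)$, and the injectivity of $\varphi_{r(g)}$ then gives $\beta_g(a 1_{g^{-1}}) = a 1_g$. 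Hence $C \subseteq A^{\beta|_{\G_{\varphi(C)}}}$, so $\G_{\varphi(C)} \sim \cH$ and $(\G_{\varphi(C)})^{max} = \cH^{max}$, which identifies $C \mapsto (\G_{\varphi(C)})^{max}$ as the inverse of $\cH \mapsto A^{\beta|_\cH}$ on $\mathsf{W}^{max}$.
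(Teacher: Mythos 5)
Your proposal is correct and follows essentially the same route as the paper's proof: pass to the orthogonalization $\varepsilon$ (the paper invokes the orthogonal correspondence of Paques--Tamusiunas where you invoke Theorem \ref{teofund}, which is equivalent here), transfer the invariant subalgebras back to $A$ via Proposition \ref{proptrad}, and collapse each $\sim$-class to its unique maximal representative via Proposition \ref{propglobalmaximal}. You in fact go a bit further than the paper by explicitly checking that $\G_{\varphi(C)}$ is a wide subgroupoid (where globality is genuinely used) and that $C \mapsto (\G_{\varphi(C)})^{max}$ really inverts $\cH \mapsto A^{\beta|_{\cH}}$ — a verification the paper's proof leaves implicit.
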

\begin{proof}
Firstly we apply \cite[Theorem 4.6]{paques2018galois} on the orthogonalization $\varepsilon$ of $\beta$ to obtain a Galois correspondence between all the wide subgroupoids of $\G$ and all the $E^\varepsilon$-separable, $\varepsilon$-strong subalgebras $C$ of $E$, that is, $\cH \leftrightarrow E^{\varepsilon|_{\cH}}$. Restricting this correspondence to $\mathsf{W}^{max}$, we have a one-to-one correspondence $\cH^{max} \leftrightarrow E^{\varepsilon|_{\cH^{max}}}$.

\noindent \emph{Claim 1.} There is a one-to-one correspondence $E^{\varepsilon|_{\cH^{max}}} \leftrightarrow A^{\beta|_{\cH^{max}}} = \varphi^{-1}(E^{\varepsilon|_{\cH^{max}}} \cap \varphi(A))$.

Indeed, define $\theta : \{E^{\varepsilon|_{\cH^{max}}} : \cH \in \mathsf{W}\} \to \{A^{\beta|_{\cH^{\max}}} : \cH \in \mathsf{W}\}$ by $\theta(E^{\varepsilon|_{\cH^{max}}}) = A^{\beta|_{\cH^{\max}}}$. We have that $\theta$ is injective, since if $\theta(E^{\varepsilon|_{\cH^{max}}}) = \theta(E^{\varepsilon|_{\mathcal{K}^{max}}})$, for $\cH^{max}, \mathcal{K}^{max} \in \mathsf{W}^{max}$, then $A^{\beta|_{\cH^{max}}} = A^{\beta|_{\mathcal{K}^{max}}}$, from where it follows by Proposition \ref{propglobalmaximal} that $\cH \sim \mathcal{K}$, that is, $\cH^{max} = \mathcal{K}^{max}$. Hence $E^{\varepsilon|_{\cH^{max}}} = E^{\varepsilon|_{\mathcal{K}^{max}}}$. Furthermore, Proposition \ref{proptrad} implies that $\theta$ is surjective, proving the claim.

\noindent \emph{Claim 2.} $\mathsf{A} = \{A^{\beta|_{\cH^{max}}}: \cH^{max} \in  \mathsf{W}^{max}\}$.

Indeed, for each $C \in \mathsf{A}$, there is a wide subgroupoid $\cH \in \mathsf{W}$ such that $C = A^{\beta|_{\cH}}$. But $\cH \sim \cH^{max}$, that is, $A^{\beta|_{\cH}} = A^{\beta|_{\cH^{max}}}$, by Proposition \ref{propglobalmaximal}. So $\mathsf{A} \subseteq \{A^{\beta|_{\cH^{max}}}: \cH^{max} \in  \mathsf{W}^{max}\}$. The other inclusion is immediate.

So, by Claim 1 and Claim 2, there is a one-to-one correspondence between the sets $ \mathsf{W}^{max}$ and $\mathsf{A}$.
\end{proof}

\begin{obs}
    The same argument used in Proposition \ref{propglobalmaximal} cannot be applied in the partial case to obtain an analogous of Theorem \ref{teofinal}, since given $a \in A$, $h,k \in \cH_1 \cup \cH_2$ such that $(h,k) \in \G_2$, we do not have, in general, that $a1_{k^{-1}h^{-1}} \in A_{k^{-1}} \cap A_{k^{-1}h^{-1}}$. Therefore we cannot guarantee that $\alpha_{hk}(a1_{k^{-1}h^{-1}}) = \alpha_h \circ \alpha_k(a1_{k^{-1}h^{-1}})$. The existence of a more general Galois correspondence theorem for partial groupoid actions will require other technique and is still an open question.
\end{obs}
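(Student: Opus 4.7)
The observation is a negative one, asserting that a specific proof technique does not transfer to the partial setting. The plan is to verify two things in sequence: (i) that the decisive step in the proof of Proposition \ref{propglobalmaximal} really does rely on a containment that is automatic only in the global case, and (ii) that this containment can fail for a generic partial action. These two points together justify both halves of the remark, and no further positive claim is being asserted.

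For (i), I would revisit the inductive step $\beta_{h_1 \cdots h_n}(a 1_{d(h_n)}) = \beta_{h_1 \cdots h_{n-1}} \circ \beta_{h_n}(a 1_{d(h_n)})$ used at the heart of Proposition \ref{propglobalmaximal}, which is justified for the global action $\beta$ by the identity $A_g = A_{r(g)}$ together with $\beta_g|_{A_{r(g)}} = \beta_g$. In the partial case the analogous identity $\alpha_{hk}(x) = \alpha_h \circ \alpha_k(x)$ is sanctioned by axiom (P3) only for $x \in \alpha_k^{-1}(A_{h^{-1}} \cap A_k)$, which in particular forces $x \in A_{k^{-1}}$. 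Hence even writing down $\alpha_h \circ \alpha_k(a 1_{(hk)^{-1}})$ already demands $a 1_{(hk)^{-1}} \in A_{k^{-1}}$, i.e.\ the containment $A_{(hk)^{-1}} \subseteq A_{k^{-1}}$.

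For (ii), I would note that the axioms yield only $A_{(hk)^{-1}} \triangleleft A_{d(hk)} = A_{d(k)}$ and $A_{k^{-1}} \triangleleft A_{d(k)}$; these are two ideals of the same ring $A_{d(k)}$, and neither is forced to contain the other. A minimal example would make this concrete: take a groupoid with three objects $e_1, e_2, e_3$ and a composable pair $k \colon e_1 \to e_2$, $h \colon e_2 \to e_3$, and arrange the ideals of $A$ so that $A_{(hk)^{-1}}$ and $A_{k^{-1}}$ are two distinct proper ideals of $A_{e_1}$, neither contained in the other; one verifies (P1)--(P3) and the compatibility $\alpha_k^{-1}(A_{h^{-1}} \cap A_k) \subseteq A_{(hk)^{-1}}$ by direct inspection. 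Then the very first reduction in the inductive chain of Proposition \ref{propglobalmaximal} is ill-defined on $a 1_{(hk)^{-1}}$.

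The main obstacle is one of scope rather than of difficulty: the remark itself is straightforward to justify by the axiom analysis above, but the deeper question flagged by the authors — whether a genuinely different argument might still produce a maximal representative of each $\sim$-class of wide subgroupoids in the partial setting — would require new techniques beyond the orthogonalization-plus-restriction framework used throughout the paper, and is therefore outside the proof of the remark.
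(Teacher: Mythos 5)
Your proposal is correct and follows essentially the same reasoning as the paper's remark: the domain of validity of (P3) for the pair $(h,k)$ is exactly $\alpha_{k}^{-1}(A_{h^{-1}}\cap A_k)=A_{k^{-1}}\cap A_{(hk)^{-1}}$, and since $A_{(hk)^{-1}}$ and $A_{k^{-1}}$ are merely two ideals of $A_{d(k)}$ with no forced containment, the element $a1_{(hk)^{-1}}$ need not lie there, which is precisely the obstruction the authors name. Your additional sketch of a three-object counterexample goes slightly beyond what the paper records, but the core justification is the same.
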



\begin{thebibliography}{1}
\bibitem{bagio2010partial}
D. Bagio, D. Flores, and A. Paques. Partial actions of ordered groupoids on rings. \emph{J. Algebra Appl.}, 9(03):501–517, 2010.

\bibitem{bagio2012partial}
D. Bagio and A. Paques. Partial groupoid actions: globalization, Morita theory, and Galois theory. \emph{Commun. Algebra}, 40(10):3658–3678, 2012.

\bibitem{bagio2020restriction}
D. Bagio, A. Paques, and H. Pinedo. Restriction and extension of partial actions. \emph{J. Pure Appl. Algebra}, 224(10):106391, 2020.

\bibitem{bagio2022galois}
D. Bagio, A. Sant’Ana, and T. Tamusiunas. Galois correspondence for group-type partial actions of groupoids. \emph{Bull. Belgian Math. Soc.-Simon Stevin}, 28(5):745–767, 2022.

\bibitem{chase1969galois}
S. U. Chase, D. K. Harrison, and A. Rosenberg. Galois theory and cohomology of commutative rings, volume 52. \emph{Am. Math. Soc.}, 1965.

\bibitem{cortes2017characterisation}
W. Cortes and T. Tamusiunas. A characterisation for a groupoid Galois extension using partial isomorphisms. \emph{Bull. Aust. Math. Soc.}, 96(1):59–68, 2017.

\bibitem{dokuchaev2007partial}
M. Dokuchaev, M. Ferrero, and A. Paques. Partial actions and Galois theory. \emph{J. Pure Appl. Algebra}, 208(1):77–87, 2007.

\bibitem{exel1998partial}
R. Exel. Partial actions of groups and actions of inverse semigroups. \emph{Proc. Am. Math. Soc.}, 126(12):3481–3494, 1998.

\bibitem{garta2024}
C. Garcia and T. Tamusiunas. A Galois correspondence for $K_{\beta}$-rings. \emph{J. Algebra}, 640:74–105, 2024.

\bibitem{knus2006theorie}
M. A. Knus and M. Ojanguren. Théorie de la descente et algèbres d’Azumaya, volume 389. \emph{Springer}, 2006.

\bibitem{lata2021galois}
W. G. Lautenschlaeger and T. Tamusiunas. Maximal ordered groupoids and a Galois correspondence for inverse semigroup orthogonal actions. \emph{Appl. Categ. Struct.}, 31(5):31, 2023.

\bibitem{lau2021semigrupoides}
W. G. Lautenschlaeger and T. R. Tamusiunas. Inverse semigroupoid actions and representations. \emph{REMAT}, 9(1):e3006, 2023.

\bibitem{lawson1998inverse}
M. V. Lawson. Inverse semigroups, the theory of partial symmetries. \emph{World Scientific}, 1998.

\bibitem{pata}
A. Paques and T. Tamusiunas. A Galois-Grothendieck-type correspondence for groupoid actions. \emph{Algebra Discrete Math.}, 17(1):80–97, 2014.

\bibitem{paques2018galois}
A. Paques and T. Tamusiunas. The Galois correspondence theorem for groupoid actions. \emph{J. Algebra}, 509:105–123, 2018.

\bibitem{pataIII}
A. Paques and T. Tamusiunas. On the Galois map for groupoid actions. \emph{Comm. Algebra}, 49(3):1037–1047, 2021.

\bibitem{pedrotti2023injectivity}
J. Pedrotti and T. Tamusiunas. Injectivity of the Galois map. \emph{Bull. Brazilian Math. Soc.}, New Series, 54(1):1–11, 2023.

\bibitem{villamayor1966galois}
O. Villamayor and D. Zelinsky. Galois theory for rings with finitely many idempotents. \emph{Nagoya Math. J.}, 27(2):721–731, 1966.

\end{thebibliography}
\end{document}